\theoremstyle{plain}
\newtheorem{thm}{Theorem}[section] 
\newtheorem{lem}[thm]{Lemma} 
\newtheorem{prop}[thm]{Proposition}
\theoremstyle{definition}
\theoremstyle{remark}
\def\dive{\operatorname{div}}
\begin{document}
 
\title[A fully diagonalized spectral method on the unit ball]
{A fully diagonalized spectral method on the unit ball}

\author[M. A. Pi\~{n}ar]{Miguel A. Pi\~{n}ar}
\address[M. A. Pi\~{n}ar]{{Departamento de Matem\'atica Aplicada\\
Universidad de Granada\\
18071 Granada, Spain}}\email{mpinar@ugr.es}

\thanks{The author thanks research project PID2023.149117NB.I00  funded  by MICIU/AEI/10.13039/501100011033 and by ``ERDF A Way of making Europe”}

\date{\today}
\keywords{Sobolev orthogonal polynomials, Dirichlet problem, unit ball}
\subjclass[2000]{33C50, 42C10}

\begin{abstract} 
Our main objective in this work is to show how Sobolev orthogonal polynomials emerge as a useful tool within the framework of spectral methods for boundary-value problems. The solution of a boundary-value problem for a stationary Schr\"odinger equation on the unit ball can be studied from a variational perspective. In this variational formulation, a Sobolev inner product naturally arises. As test functions, we consider the linear space of the polynomials satisfying the boundary conditions on the sphere, and a basis of mutually orthogonal polynomials with respect to the Sobolev inner product is provided. The basis of the proposed method is given in terms of spherical harmonics and univariate Sobolev orthogonal polynomials. The connection formula between these Sobolev orthogonal polynomials and the classical orthogonal polynomials on the ball is established. Consequently, the Sobolev Fourier coefficients of a function satisfying the boundary value problem are recursively derived. Finally, one numerical experiment is presented.
\end{abstract}

\maketitle

\section{Introduction}
\setcounter{equation}{0}

The purpose of this paper is to numerically solve a Dirichlet boundary value problem on the 
unit ball \(\mathbb{B}^d = \{x: \|x\|^2\le 1\}\), where \(\|x\|^2 = x_1^2 + \ldots +x_d^2\) defines the usual Euclidean norm in \(\mathbb{R}^d\). In particular, we will consider the solution of the elliptic partial differential equation
\begin{equation} \label{pde-kappa}
 -  \Delta u  + \lambda(1-\|x\|^2)^{\kappa}u = f(x) \qquad x \in {\mathbb B}^d,
\end{equation}
with the Dirichlet boundary condition
\begin{equation} \label{boundary}
 u(x)= 0 \qquad x \in \partial\mathbb{B}^d = {\mathbb S}^{d-1},
\end{equation}
where \(\lambda>0\), \(\kappa\) is a nonnegative integer, and \(f(x)\) is a several times continuously differentiable function on \({\mathbb B}^d\).

For the Dirichlet problem \eqref{pde-kappa}-\eqref{boundary} the standard variational formulation reads as follows: Find \(u \in H^1_0({\mathbb B}^d)\) such that
\begin{equation} \label{var_form}
\int_{{\mathbb B}^d} \left[\nabla u \cdot\nabla v + \lambda\, u(x)\,v(x)\,(1-\|x\|^2)^{\kappa}\right]\,dx = \int_{{\mathbb B}^d} f(x)\, v(x)\,dx, \quad \forall 
v \in H^1_0({\mathbb B}^d)
\end{equation}
Introduce the bilinear form  
\begin{equation} \label{bil_form_A}
\mathcal{A}(v,w) = \int_{{\mathbb B}^d} \left[\nabla v \cdot\nabla w + \lambda\, v(x)\,w(x)\,(1-\|x\|^2)^{\kappa}\right]\,dx  \quad \forall 
v, w \in H^1_0({\mathbb B}^d),
\end{equation}
and the bounded linear functional
\[
\mathcal{L}(v) = \int_{{\mathbb B}^d} f(x)\, v(x)\,dx, \quad \forall 
v \in H^1_0({\mathbb B}^d).
\]
Therefore, the variational problem \eqref{var_form} can be expressed as follows: find \(u \in H^1_0({\mathbb B}^d)\) for which \(\mathcal{A}(u,v) = \mathcal{L}(v) \quad \forall v \in H^1_0({\mathbb B}^d).\) 

It is straightforward to show \(\mathcal{A}\) is coercive and bounded. Assuming some mild conditions on \(f(x)\) the Lax-Milgram Theorem implies the existence of a unique solution \(u\) to \eqref{var_form}. 

Let $\Pi_n^d$ be the space of polynomials of total degree at most $n$ in $d$ variables. 
To implement a Galerkin method for obtaining an approximate solution to \eqref{var_form} we consider as our test functions the linear space of polynomials satisfying the boundary conditions at the sphere \({\mathbb S}^{d-1}\)
\begin{equation} \label{space_Rn}
\mathcal{R}_n^d = \left\{(1-\|x\|^2) p(x): p(x) \in \Pi_n^d\right\}
\end{equation}
and the problem can be stated as follows: find \(u_n \in \mathcal{R}_n^d\) for which \(\mathcal{A}(u,v) = \mathcal{L}(v) \quad \forall v \in \mathcal{R}_n^d\). Again, the Lax-Milgram Theorem implies the existence of a unique \(u_n\) for all \(n\).
 
There exist many different methods to construct the approximate solution \(u_n\). When the approximation in $L^2$ norm is most relevant, orthogonal functions appear naturally. In this context, classical orthogonal polynomials on the unit ball have been used in spectral methods for solving partial differential equations \cite{ACH1, ACH2, ACH3, LX, STW}. One usual limitation in this technique is that the discretization of partial differential
equations by spectral methods generally leads to the solution of large systems of linear equations involving full matrices, which are usually badly conditioned. In some cases, for \(d=2\) and \(d=3\), classical orthogonal polynomials have been used to devise sparse spectral methods (involving banded matrices, as opposed to full matrices) on the unit ball, see \cite{vas15,vas18}. 

Our approach to this problem differs from the usual one. Realizing that
the bilinear form \(\mathcal{A}\) defines an inner product in \(H^1_0({\mathbb B}^d)\), we can benefit from the variational formulation by constructing a basis of \(\mathcal{R}_n^d\) constituted by polynomials that are orthogonal with respect to \(\mathcal{A}\). 

Orthogonal polynomials on the unit ball with respect to an inner product that involves the integral of $\nabla f \cdot \nabla g$ are usually called \textbf{multivariate Sobolev orthogonal polynomials} and have been studied in a few cases \cite{PPX, PX08, Xu08}. The study in this direction was 
initiated in \cite{Xu08}, where the author considered the inner product
\begin{equation} \label{eq:first-ip}
  \langle f,g \rangle_{I} : = {\frac{\lambda}{\omega_d}} \int_{{\mathbb B}^{d}} \nabla f(x) \cdot \nabla g(x) dx + {\frac{1}{\omega_d}} \int_{{\mathbb S}^{d-1}} f(x) g(x) d\sigma, \quad \lambda  >0, 
\end{equation}
and another one that has the last term on the right-hand side of $\langle f, g\rangle_I$ replaced by ${f(0)g(0)}$. The orthogonal polynomials with respect to the inner product $\langle f, g \rangle_I$ have the distinction that  they are eigenfunctions of a second-order partial differential operator (see \cite{PX08}). The case \(\kappa = 0\) in \eqref{bil_form_A} was considered in \cite{PPX}.

\bigskip

However, to guarantee that our approximate solution satisfies the boundary condition \eqref{boundary} in this work we consider the Sobolev inner product defined in \(\Pi^d\) by means of the following expression: 
\begin{align} 
\langle  u, v\rangle_{S} &:= \mathcal{A} \left((1-\|x\|^2) u(x), (1-\|x\|^2)v(x)\right) =  \lambda\, \int_{{\mathbb B}^{d}} (1-\|x\|^2)^{\kappa+2} u(x) v(x) dx \nonumber  \\ 
  &+ \int_{{\mathbb B}^{d}} \nabla ((1-\|x\|^2)u(x))\,\cdot \nabla ((1-\|x\|^2)v(x)) dx. \label{Sob_ip}
\end{align}
It is easy to see that \(\langle  \cdot, \cdot\rangle_{S}\) defines an inner product in \(\Pi^d\). 

Let us denote by \(\mathcal{V}_n^d(S)\) the space of orthogonal polynomials of degree exactly \(n\) with respect to the inner product \eqref{Sob_ip}. Our first objective is to construct a basis for this space. To take full advantage of the central symmetry of \eqref{Sob_ip} we will reproduce the structure of the classical orthogonal polynomials on the ball, writing the polynomials in spherical polar coordinates as the product of spherical harmonics and a radial part connected to the Jacobi polynomials. 
Next, we will show how these Sobolev orthogonal polynomials appear as a useful tool in the framework of spectral methods for boundary value problems, since they will allow us to obtain easily the coefficients in the Fourier--Sobolev expansion of the solution to the Dirichlet problem \eqref{pde-kappa}-\eqref{boundary},
avoiding the need of solving large linear systems of equations.

\bigskip

The paper is organized as follows. In the next section, we recall the background materials on spherical harmonics and orthogonal polynomials on the unit ball.  In Section 3, Sobolev orthogonal polynomials with respect to the inner product \eqref{Sob_ip} are obtained in spherical polar coordinates. The connection formulas for the radial parts of these polynomials with the Jacobi polynomials are stated and, as a consequence, in Section 4, the Fourier--Sobolev  coefficients of the solution to the Dirichlet problem \eqref{pde-kappa}-\eqref{boundary} are deduced in a recursive way. Finally, a numerical experiment is shown.

\section{Spherical harmonics and orthogonal polynomials on the unit ball}
\setcounter{equation}{0}

In this section, following \cite{FPP}, we describe the background materials about orthogonal polynomials and spherical harmonics. The first subsection collects the properties of the Jacobi polynomials that are needed later. The second subsection presents the basic results on spherical harmonics and classical orthogonal polynomials on the unit ball. 

\subsection{Classical Jacobi polynomials}
We present the properties of the classical Jacobi polynomials $P_n^{(\alpha,\beta)}(t)$ that we need subsequently. Most of these properties can be found in \cite{Sz}. The Jacobi polynomial $P_n^{(\alpha, \beta)}(t)$ is normalized as follows: 
\begin{equation} \label{jac-norm}
P_n^{(\alpha,\beta)}(1) = \binom{n+\alpha}{n} = \frac{(\alpha+1)_n}{n!}, \quad n \in \mathbb{N}_0.
\end{equation}
For $\alpha, \beta > -1$, these 
polynomials are orthogonal with respect to the Jacobi weight function
$$
   w_{\alpha,\beta}(t) := (1-t)^\alpha(1+t)^{\beta}, \qquad  -1< x < 1,
$$
and their $L^2$ norms in $L^2(w_{\alpha,\beta}, [-1,1])$ are given by 
\begin{align}\label{normJ}
h_n^{(\alpha,\beta)} := & \int_{-1}^1 P_{n}^{(\alpha, \beta)}(t)^2 \, w_{\alpha,\beta}(t)\,dt 
 = \frac{2^{\alpha+\beta+1}}{2n+\alpha+\beta+1} \frac{\Gamma(n+\alpha+1)\,\Gamma(n+\beta+1)}{n!\,\Gamma(n+\alpha+\beta+1)},
\end{align}
in particular \[h_0^{(\alpha,\beta)} = 2^{\alpha+\beta+1} 
\dfrac{\Gamma(\alpha+1)\,\Gamma(\beta+1)}{\Gamma(\alpha+\beta+2)}.\]

The polynomial $P_n^{(\alpha,\beta)}(t)$ is of degree $n$, and its leading coefficient $k_n^{(\alpha,\beta)}$
is given by 
\begin{equation}\label{leadingcoef}
k_n^{(\alpha,\beta)} = \frac{1}{2^n}\, \binom{2n + \alpha + \beta}{n}, \quad n \in \mathbb{N}_0.
\end{equation}
Jacobi polynomials satisfy a three-term recurrence relation
\begin{align}
2(n+1) &(n + \alpha+\beta+1) (2n + \alpha+\beta) P_{n+1}^{(\alpha,\beta)}(t) = \nonumber\\
= & (2n + \alpha+\beta+1) \left\{(2n + \alpha+\beta+2)(2n + \alpha+\beta) t + \alpha^2-\beta^2\right\}P_n^{(\alpha,\beta)}(t) \label{three-term}\\
  &- 2(n+\alpha) (n + \beta) (2n + \alpha+\beta+2) P_{n-1}^{(\alpha,\beta)}(t). \nonumber
\end{align}
The derivative of a Jacobi polynomial is again a Jacobi polynomial, 
\begin{equation}\label{derJ}
\frac{d}{d t} P_n^{(\alpha,\beta)}(t) = \frac{n+\alpha + \beta+1}{2} P_{n-1}^{(\alpha+1,\beta+1)}(t).
\end{equation}

The following relations between different families of the Jacobi polynomials can be found in 
(\cite[Chapt. 22]{AS}):
\begin{eqnarray}
&~& (1-t) P_n^{(\alpha+1,\beta)}(t) = \frac{2(n+\alpha+1)}{2n+\alpha+\beta+2}P_n^{(\alpha,\beta)}(t) - \frac{2(n+1)}{2n+\alpha+\beta+2}P_{n+1}^{(\alpha,\beta)}(t),\label{RAF2}\\
&~& P_n^{(\alpha,\beta)}(t) = a_n^{(\alpha,\beta)} P_n^{(\alpha+1,\beta)}(t) - b_n^{(\alpha,\beta)}P_{n-1}^{(\alpha+1,\beta)}(t),\label{RAF}\\
&~& P_n^{(\alpha,\beta)}(t) = a_n^{(\alpha,\beta)} P_n^{(\alpha,\beta+1)}(t) + b_n^{(\beta,\alpha)}P_{n-1}^{(\alpha,\beta+1)}(t),\label{RAF0}
\end{eqnarray}
where
\begin{equation}\label{coef-a-b}
a_n^{(\alpha,\beta)} = \frac{n+\alpha+\beta+1}{2\,n + \alpha + \beta+1}, \quad b_n^{(\alpha,\beta)} = \frac{n+\beta}{2\,n + \alpha + \beta+1}.
\end{equation}
They are useful to prove the following two relations that we need later. 

\begin{lem} \label{lem:Jacobi-recur}
For $\alpha > -1$ and $\beta > 0$, 
\begin{align}
(1-t)\,\frac{d}{d t} P_n^{(\alpha,\beta)}(t) = \alpha \, P_{n-1}^{(\alpha,\beta+1)}(t) - n \,P_n^{(\alpha-1,\beta+1)}(t),
   \label{jacobi-1} \\
-\alpha P_{n}^{(\alpha,\beta)}(t) + (1-t)\,\frac{d}{d t} P_n^{(\alpha,\beta)}(t) = -(n+\alpha) \,P_n^{(\alpha-1,\beta+1)}(t).
\label{jacobi-2}
\end{align}
\end{lem}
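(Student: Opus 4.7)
My plan is to derive both identities by systematically applying the recurrence/derivative formulas \eqref{derJ}, \eqref{RAF2}, \eqref{RAF}, \eqref{RAF0} already listed in the excerpt. I will prove \eqref{jacobi-1} first and then deduce \eqref{jacobi-2} from it via a short auxiliary identity.

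For \eqref{jacobi-1}, the natural starting move is to use the derivative formula \eqref{derJ} to rewrite the left-hand side as
\[
(1-t)\frac{d}{dt}P_n^{(\alpha,\beta)}(t) = \frac{n+\alpha+\beta+1}{2}\,(1-t)\,P_{n-1}^{(\alpha+1,\beta+1)}(t).
\]
Then I apply \eqref{RAF2} with the parameter shift $(n,\alpha,\beta)\mapsto(n-1,\alpha,\beta+1)$ to expand $(1-t)P_{n-1}^{(\alpha+1,\beta+1)}(t)$ as a combination of $P_{n-1}^{(\alpha,\beta+1)}(t)$ and $P_n^{(\alpha,\beta+1)}(t)$. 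This produces an expression in terms of two Jacobi polynomials with parameters $(\alpha,\beta+1)$. To convert the $P_n^{(\alpha,\beta+1)}$ term into the required $P_n^{(\alpha-1,\beta+1)}$, I invoke \eqref{RAF} with parameters $(\alpha,\beta)\mapsto(\alpha-1,\beta+1)$, solving for $P_n^{(\alpha,\beta+1)}$ in terms of $P_n^{(\alpha-1,\beta+1)}$ and $P_{n-1}^{(\alpha,\beta+1)}$. After collecting terms, the coefficient of $P_{n-1}^{(\alpha,\beta+1)}(t)$ should simplify to exactly $\alpha$; this is the step I expect to carry the main computational weight, since it requires the algebraic identity $(n+\alpha+\beta+1)(n+\alpha)-n(n+\beta+1)=\alpha(2n+\alpha+\beta+1)$, which is clean once expanded.

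For \eqref{jacobi-2}, rather than repeat the argument from scratch, I would first establish the auxiliary identity
\[
P_n^{(\alpha,\beta)}(t) = P_n^{(\alpha-1,\beta+1)}(t) + P_{n-1}^{(\alpha,\beta+1)}(t).
\]
This follows by subtracting \eqref{RAF} (applied with $(\alpha,\beta)\mapsto(\alpha-1,\beta+1)$) from \eqref{RAF0}: using \eqref{coef-a-b}, one checks that $a_n^{(\alpha-1,\beta+1)}=a_n^{(\alpha,\beta)}$, so the $P_n^{(\alpha,\beta+1)}$ terms cancel, and the remaining coefficient in front of $P_{n-1}^{(\alpha,\beta+1)}$ collapses to $1$ because $(n+\alpha)+(n+\beta+1)=2n+\alpha+\beta+1$. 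Multiplying this auxiliary identity by $\alpha$ and adding it to \eqref{jacobi-1} produces \eqref{jacobi-2} immediately, since $\alpha P_{n-1}^{(\alpha,\beta+1)}$ cancels with the first term on the right of \eqref{jacobi-1} and the $P_n^{(\alpha-1,\beta+1)}$ contributions combine to $-(n+\alpha)$.

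The only real obstacle is making sure the parameter shifts in \eqref{RAF2}, \eqref{RAF}, \eqref{RAF0} and the values $a_n^{(\cdot,\cdot)}, b_n^{(\cdot,\cdot)}$ from \eqref{coef-a-b} are applied consistently; once that bookkeeping is done, both identities reduce to elementary manipulations. The hypothesis $\beta>0$ is needed so that $\beta+1>-1$ (so that all Jacobi families appearing are well-defined classical ones), and $\alpha>-1$ ensures the original polynomial is classical; no analytic continuation argument is needed.
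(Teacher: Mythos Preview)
Your proof is correct and follows essentially the same route as the paper: use \eqref{derJ} and \eqref{RAF2} to reach a combination of $P_{n-1}^{(\alpha,\beta+1)}$ and $P_n^{(\alpha,\beta+1)}$, then apply \eqref{RAF} to get \eqref{jacobi-1}, and deduce \eqref{jacobi-2} from \eqref{jacobi-1} via the auxiliary identity $P_n^{(\alpha,\beta)}=P_n^{(\alpha-1,\beta+1)}+P_{n-1}^{(\alpha,\beta+1)}$ (which the paper states equivalently as $P_n^{(\alpha,\beta-1)}-P_n^{(\alpha-1,\beta)}=P_{n-1}^{(\alpha,\beta)}$). One minor slip: to obtain \eqref{jacobi-2} you must \emph{subtract} $\alpha$ times the auxiliary identity from \eqref{jacobi-1} (or multiply by $-\alpha$ before adding); with that sign the cancellation you describe goes through exactly.
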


\begin{proof}
From \eqref{derJ} and \eqref{RAF2}, we deduce
\begin{align*}
(1-t)\,\frac{d}{d t} P_n^{(\alpha,\beta)}(t) =\ & (1-t)\,\frac{n+\alpha+\beta +1}{2}P_{n-1}^{(\alpha+1,\beta+1)}(t)\\
=\ & \frac{n+\alpha+\beta+1}{2n+\alpha+\beta+1} \left[(n+\alpha)\,P_{n-1}^{(\alpha,\beta+1)}(t) - n\,P_n^{(\alpha,\beta+1)}(t)\right].
\end{align*}
Replacing the last term by 
\[
P_{n}^{(\alpha,\beta+1)}(t) = \frac{2n+\alpha+ \beta +1}{n+\alpha + \beta +1}P_{n}^{(\alpha-1,\beta+1)}(t)+
\frac{n+\beta + 1}{n+\alpha + \beta +1}P_{n-1}^{(\alpha,\beta+1)}(t),
\]
according to \eqref{RAF}, and simplifying the result, we obtain \eqref{jacobi-1}. The relation \eqref{jacobi-2} is deduced from 
\eqref{jacobi-1} upon using $P_n^{(\alpha,\beta-1)}(t) - P_n^{(\alpha-1,\beta)}(t) = P_{n-1}^{(\alpha,\beta)}(t)$,
which follows from \eqref{RAF} and \eqref{RAF0}.
\end{proof}

\subsection{Orthogonal polynomials on the unit ball}
For $x,y  \in \mathbb{R}^d$, we use the usual notation of $\|x\|$ and $\langle x,y \rangle $ to denote the Euclidean 
norm of $x$ and the dot product of $x,y$.  The unit ball and the unit sphere in $\mathbb{R}^d$ are 
{denoted}, respectively, by
$$
\mathbb{B}^d :=\{x\in \mathbb{R}^d: \|x\| \le 1\} \qquad \textrm{and} \qquad \mathbb{S}^{d-1}:=\{\xi\in \mathbb{R}^d: \|\xi\| = 1\}.
$$
For $\mu \in \mathbb{R}$, let $W_\mu$ be the weight function defined by
$$
    W_\mu(x) = (1-\|x\|^2)^\mu, \qquad  \|x\| < 1.
$$
The function $W_\mu$ is integrable on the unit {ball} if $\mu > -1$, for which we denote the normalization 
constant of $W_\mu$ by ${b_\mu}$, 
\[
b_\mu := \left(\int_{{\mathbb{B}^d}}\, W_\mu(x) \, dx\right)^{-1} = \frac{\Gamma(\mu + d/2 + 1)}{\pi^{d/2}\Gamma(\mu+1)}.
\]
The classical orthogonal polynomials on the unit ball are orthogonal with respect to the inner product
\begin{equation}\label{ball-ip}
   \langle f, g \rangle _\mu = b_\mu \int_{{\mathbb{B}^d}}\, f(x)\, g(x) \, W_\mu(x) \, dx,
\end{equation}
which is normalized in such a way that $\langle 1,1\rangle_\mu = 1$. 

Let $\Pi^d$ denote the space of polynomials in $d$ real variables. For $n = 0,1,2,\ldots,$
let $\Pi_n^d$ denote the linear space of polynomials in several variables of (total) degree 
at most $n$ and let $\mathcal{P}_n^d$ denote the space of homogeneous polynomials of degree 
$n$. It is well known that 
\[
  \dim \Pi_n^d = \binom{n+d}{n} \quad \hbox{and} \quad \dim \mathcal{P}_n^d = \binom{n+d-1}{n}:= r_n^d.
\]
A polynomial $P \in \Pi_n^d$ is called orthogonal with respect to $W_\mu$ on the ball if 
$\langle P, Q\rangle_\mu =0$ for all $Q \in \Pi_{n-1}^d$, that is, if it is orthogonal to all polynomials of lower degree. Let $\mathcal{V}_n^d(W_\mu)$ denote the space of orthogonal polynomials of total degree $n$ with respect to \(W_\mu\). Then $\dim \, \mathcal{V}_n^d(W_\mu) = r_n^d.$

For $n\ge 0$, let $\{P^n_{\alpha}(x) : |\alpha|=n\}$ denote a basis of $\mathcal{V}_n^d(W_\mu)$. 
Then, every element of $\mathcal{V}_n^d(W_\mu)$ is orthogonal to the polynomials of lower degree. If the 
elements of the basis are also orthogonal to each other, that is, $\langle  P_\alpha^n, P_\beta^n \rangle _\mu=0$ whenever \(\alpha \ne \beta\), we call the basis mutually orthogonal. If, in addition, $\langle P_\alpha^n, P_\alpha^n \rangle _\mu =1$, we call the basis orthonormal. 

Harmonic polynomials of $d$-variables are homogeneous polynomials in $\mathcal{P}_n^d$ that satisfy the Laplace equation $\Delta Y = 0$, where 
\(\Delta = \frac{\partial^2}{\partial x_1^2} + \ldots + \frac{\partial^2}{\partial x_d^2}\) is the usual Laplace operator. Let $\mathcal{H}_n^d$ denote the space of harmonic polynomials of degree $n$. It is well known that
$$
         a_n^d: = \dim \mathcal{H}_n^d = \binom{n+d-1}{n} - \binom{n+d-3}{d-1}.
$$
Spherical harmonics are the restriction of harmonic polynomials on the unit sphere. If $Y \in \mathcal{H}_n^d$, {then} $Y(x) = r^n Y(\xi)$ in spherical–polar {coordinates} $x = r \xi$, so that 
$Y$ is uniquely determined by its restriction on the sphere. We shall also use $\mathcal{H}_n^d$ to denote the space of spherical harmonics of degree $n$. Let $d \sigma$ denote the surface measure
and $\omega_d$ denote the surface area, 
$$
  \omega_d := \int_{\mathbb{S}^{d-1}} d\sigma = \frac{2\, \pi^{d/2}}{\Gamma(d/2)}.
$$
Spherical harmonics of different degrees are orthogonal with respect to the inner product
\[
   \langle f, g \rangle_{\mathbb{S}^{d-1}}: = \frac{1}{\omega_d} \int_{\mathbb{S}^{d-1}} f(\xi) g(\xi) d\sigma(\xi). 
\]
Moreover, by Euler's equation for homogeneous polynomials, 
\[x \cdot \nabla Y_\nu^n(x) = n Y_\nu^n(x).
\]

In spherical-polar coordinates $x = r \xi$, $ r > 0$ and $\xi \in \mathbb{S}^{d-1}$, a mutually orthogonal basis of $\mathcal{V}_n^d(W_\mu)$ can be given in terms of the Jacobi polynomials stated in Sec.2.1 and spherical harmonics. Let $P_n^{(\alpha,\beta)}(t)$ denote the usual Jacobi polynomial of degree $n$.
The following well-known result provides such an orthogonal basis of $\mathcal{V}_n^d(W_\mu)$ (see, for example, \cite{DX}). 

\begin{thm}
For $n \in \mathbb{N}_0$ and $0 \le j \le n/2$, let $\{Y_\nu^{n-2j}: 1\le \nu\le a_{n-2j}^d\}$ denote an orthonormal basis for $\mathcal{H}_{n-2j}^d$. Define 
\begin{equation}\label{baseP}
P_{j,\nu}^{n,\mu}(x) = P_{j}^{(\mu, n-2j + \frac{d-2}{2})}(2\,\|x\|^2 -1)\, Y_\nu^{n-2j}(x). 
\end{equation}
Then, the set $\{P_{j,\nu}^{n,\mu}(x): 0 \le j \le \lfloor n/2 \rfloor, \,1 \le \nu \le a_{n-2j}^d \}$ is a mutually
orthogonal basis of $\mathcal{V}_n^d(W_\mu)$. More precisely, 
$$
\langle P_{j,\nu}^{n,\mu}(x), P_{k,\eta}^{m,\mu}(x)\rangle_\mu =  H_{j,n}^{\mu}  \delta_{n,m}\,\delta_{j,k}\,\delta_{\nu,\eta},
$$
where $H_{j,n}^{\mu}$ is given by 
\begin{equation} \label{eq:Hjn-mu}
 H_{j,n}^{\mu}: = \frac{(\mu +1)_j (\frac{d}{2})_{n-j} (n-j+\mu+ \frac{d}{2})}
    { j! (\mu+\frac{d+2}{2})_{n-j} (n+\mu+ \frac{d}{2})}, 
\end{equation} 
{where $(a)_n= a(a+1) \ldots (a+n-1)$ denotes the Pochhammer symbol.} 
\end{thm}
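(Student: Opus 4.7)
The plan is to pass to spherical polar coordinates $x=r\xi$, $r\in[0,1]$, $\xi\in\mathbb{S}^{d-1}$, separate the inner product into an angular and a radial factor, and use the orthonormality of spherical harmonics together with Jacobi orthogonality to establish both the Kronecker deltas and the constant $H_{j,n}^\mu$; finally, a dimension count promotes the mutually orthogonal family to a basis of $\mathcal{V}_n^d(W_\mu)$. Note first that since $P_j^{(\mu,\beta)}(2r^2-1)$ is a polynomial in $\|x\|^2$ of degree $2j$ and $Y_\nu^{n-2j}(x)=r^{n-2j}Y_\nu^{n-2j}(\xi)$ is homogeneous of degree $n-2j$, each $P_{j,\nu}^{n,\mu}$ belongs to $\Pi_n^d$ and has total degree exactly $n$.

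With $dx=r^{d-1}\,dr\,d\sigma(\xi)$ and $W_\mu(x)=(1-r^2)^\mu$, the inner product $\langle P_{j,\nu}^{n,\mu},P_{k,\eta}^{m,\mu}\rangle_\mu$ factors as
\begin{align*}
b_\mu\!\int_0^{\!1}\! r^{(n-2j)+(m-2k)+d-1}(1-r^2)^\mu P_j^{(\mu,\alpha_{n,j})}(2r^2-1)P_k^{(\mu,\alpha_{m,k})}(2r^2-1)\,dr\\
\times\int_{\mathbb{S}^{d-1}}Y_\nu^{n-2j}(\xi)Y_\eta^{m-2k}(\xi)\,d\sigma(\xi),
\end{align*}
where $\alpha_{n,j}=n-2j+(d-2)/2$. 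By the orthonormality of the spherical harmonics, the angular factor equals $\omega_d\,\delta_{n-2j,\,m-2k}\,\delta_{\nu,\eta}$, so the whole integral vanishes unless the spherical-harmonic degrees agree and $\nu=\eta$.

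When $n-2j=m-2k=:\ell$, we substitute $t=2r^2-1$, giving $r\,dr=dt/4$, $r^2=(1+t)/2$, $1-r^2=(1-t)/2$, so the radial integral becomes
\[
\frac{1}{2^{\mu+\alpha+2}}\int_{-1}^{1}P_j^{(\mu,\alpha)}(t)P_k^{(\mu,\alpha)}(t)(1-t)^\mu(1+t)^{\alpha}\,dt,\qquad \alpha=\ell+\tfrac{d-2}{2}.
\]
By the Jacobi orthogonality relation \eqref{normJ}, this vanishes for $j\neq k$ (which, combined with $n-2j=m-2k$, forces $n=m$ once $j=k$), and equals $h_j^{(\mu,\alpha)}/2^{\mu+\alpha+2}$ when $j=k$. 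Using the explicit values of $b_\mu$, $\omega_d$, and $h_j^{(\mu,\alpha)}$, and the identifications $2j+\mu+\alpha+1=n+\mu+d/2$, $j+\alpha+1=n-j+d/2$, $j+\mu+\alpha+1=n-j+\mu+d/2$, the constant in front cleanly rewrites, via $(a)_k=\Gamma(a+k)/\Gamma(a)$, into the Pochhammer form \eqref{eq:Hjn-mu}. This Gamma-function bookkeeping is the one genuinely delicate computation, but it is routine once the three identifications above are made.

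It remains to upgrade mutual orthogonality into a basis of $\mathcal{V}_n^d(W_\mu)$. The mutual orthogonality already established for all degrees $m\leq n$ gives linear independence of the collection $\{P_{j,\nu}^{m,\mu}:0\le m\le n,\,0\le j\le\lfloor m/2\rfloor,\,1\le\nu\le a_{m-2j}^d\}$, and the standard identity $\sum_{j=0}^{\lfloor m/2\rfloor}a_{m-2j}^d=r_m^d$ (which reflects the decomposition $\Pi_m^d=\bigoplus_j\|x\|^{2j}\mathcal{H}_{m-2j}^d$) shows their total count matches $\dim\Pi_n^d$, so they form a basis of $\Pi_n^d$. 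Then each degree-$n$ element is orthogonal to the spanning set of $\Pi_{n-1}^d$ provided by the lower-degree members, hence lies in $\mathcal{V}_n^d(W_\mu)$; the count $r_n^d=\dim\mathcal{V}_n^d(W_\mu)$ closes the argument.
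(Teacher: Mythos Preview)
The paper does not prove this theorem; it is stated as a well-known result with a citation to \cite{DX}. Your argument is the standard one and is correct: factor the integral in spherical polars, use orthonormality of the $Y_\nu^{n-2j}$ for the angular piece, reduce the radial piece to Jacobi orthogonality via $t=2r^2-1$, and then count dimensions. One cosmetic slip: in your parenthetical justification of $\sum_{j}a_{m-2j}^d=r_m^d$ the relevant decomposition is of the \emph{homogeneous} space $\mathcal{P}_m^d=\bigoplus_{j}\|x\|^{2j}\mathcal{H}_{m-2j}^d$, not of $\Pi_m^d$; the identity and the subsequent counting argument are unaffected.
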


\section{Sobolev orthogonal polynomials on the unit ball}
\setcounter{equation}{0}

Let us consider the \textbf{Sobolev} inner product 

\begin{align*}
\langle f,g\rangle_{\nabla,\mathbb{B}^d} &:=  \lambda\, \int_{\mathbb{B}^d} f(x) g(x) (1-\|x\|^2)^{\kappa} dx +
    \int_{\mathbb{B}^d} \nabla f(x)\,\cdot \nabla g(x) dx,
\end{align*}
where \(\lambda > 0\). And let $\{Y_\nu^{n-2j}: 1 \le \nu \le a_{n-2j}^d\}$ be an orthonormal basis of $\mathcal{H}_{n-2j}^d$.
To construct an orthogonal basis for the linear space \(\mathcal{R}_n^d\) defined in \eqref{space_Rn}, we can try polynomials of the form 
\[
   R_{j,\nu}^{n}(x) := q_{j}(2 \|x\|^2 -1) Y_\nu^{n-2j}(x),
\]
where \(q_{j}\) is a polynomial of degree j in one variable satisfying the orthogonality condition
\[
\langle (1-\|x\|^2) R_{j,\nu}^{n}(x),(1-\|x\|^2)R_{k,\eta}^{m}(x)\rangle_{\nabla,\mathbb{B}^d} = K_{j,n}  \delta_{n,m}\,\delta_{j,k}\,\delta_{\nu,\eta}.
\]

\begin{prop} \label{prop-3.1}
Let $\{Y_\nu^{n-2j}: 1 \le \nu \le a_{n-2j}^d\}$ be an orthonormal basis of $\mathcal{H}_{n-2j}^d$ and let
\[
R_{j,\nu}^{n}(x) := q_{j}(2 \|x\|^2 -1) Y_\nu^{n-2j}(x)
\]
be a basis of Sobolev mutually orthogonal polynomials then $q_{j} = q_{j}^{(\beta)}$ is orthogonal with respect to the univariate \textbf{Sobolev inner product} 
\begin{align}
    (f,g)_{\beta} := & \frac{\lambda}{2^{\kappa+3}} \int_{-1}^1 q_{j}(t) q_{k}(t) (1-t)^{\kappa + 2} (1+t)^{\beta} dt \nonumber\\
 & +    \int_{-1}^1 \frac{d}{dt}\left((1-t)q_j(t)\right) \frac{d}{dt}\left((1-t)q_k(t)\right) (1+t)^{\beta+1} dt, \label{sob_ip_beta} 
\end{align}
where \(\beta = n -2j + \frac{d-2}{2}\). 
\end{prop}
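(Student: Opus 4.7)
The plan is to work in spherical-polar coordinates $x = r\xi$ and reduce the ball inner product $\langle (1-\|x\|^2)R_{j,\nu}^n,\,(1-\|x\|^2)R_{k,\eta}^m\rangle_{\nabla,\mathbb{B}^d}$ to a one-dimensional integral in $t=2r^2-1$ that can be identified, up to a single constant factor, with $(q_j,q_k)_\beta$. First I write $(1-\|x\|^2)R_{j,\nu}^n(x) = \phi_j(r)\,Y_\nu^{n-2j}(\xi)$, where
\[
\phi_j(r) := \tfrac12\,r^{n-2j}\,\psi_j(2r^2-1), \qquad \psi_j(t):=(1-t)q_j(t),
\]
and I use the orthogonal decomposition $\nabla = \hat r\,\partial_r + r^{-1}\nabla_S$ of the Euclidean gradient ($\hat r = x/\|x\|$, $\nabla_S$ the spherical gradient) to get
\[
\nabla(\phi_j Y)\cdot\nabla(\phi_k Y') = \phi_j'(r)\phi_k'(r)\,Y(\xi)Y'(\xi) + r^{-2}\phi_j(r)\phi_k(r)\,\nabla_S Y(\xi)\cdot\nabla_S Y'(\xi).
\]
Orthogonality of spherical harmonics of different degrees, together with the identity $\int_{\mathbb{S}^{d-1}}\nabla_S Y\cdot\nabla_S Y'\,d\sigma = \ell(\ell+d-2)\int YY'\,d\sigma$ (obtained by integration by parts on the sphere and the eigenvalue relation $-\Delta_S Y = \ell(\ell+d-2)Y$), immediately kills the whole inner product whenever $n-2j\neq m-2k$ or $\nu\neq\eta$; the same orthogonality trivially handles the $\lambda$-weighted term.

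In the remaining ``diagonal'' case $\ell := n-2j = m-2k$, $\nu=\eta$, the gradient contribution reduces to
\[
\omega_d\int_0^1 \bigl[\phi_j'(r)\phi_k'(r)\,r^{d-1} + \ell(\ell+d-2)\,\phi_j(r)\phi_k(r)\,r^{d-3}\bigr]\,dr,
\]
and the key step is to show that this collapses to a single clean radial integral. My plan is to set $\Phi_j(r) := r^{-\ell}\phi_j(r) = \tfrac12\psi_j(2r^2-1)$, expand $\phi_j'(r) = \ell r^{\ell-1}\Phi_j(r) + r^\ell\Phi_j'(r)$, and substitute. The resulting $\ell^2 r^{2\ell+d-3}\Phi_j\Phi_k$ contribution and the cross term $\ell r^{2\ell+d-2}(\Phi_j\Phi_k)'$, after integrating the cross term by parts against $r^{2\ell+d-2}$, exactly cancel the angular $\ell(\ell+d-2)\phi_j\phi_k/r^2$ contribution. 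The boundary term at $r=1$ vanishes because $\Phi_j(1)=\tfrac12\psi_j(1)=0$, which is precisely the Dirichlet condition built into the factor $(1-\|x\|^2)$; the term at $r=0$ vanishes either because $2\ell+d-2>0$ or, if $\ell=0$, because the middle term carries the prefactor $\ell=0$. What survives is $\omega_d\int_0^1 r^{2\ell+d-1}\Phi_j'(r)\Phi_k'(r)\,dr$.

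Finally, the substitution $t=2r^2-1$, under which $\Phi_j'(r)=2r\,\psi_j'(t)$, $(1-r^2)=(1-t)/2$, and $r^{2\ell+d-2}=((1+t)/2)^\beta$ with $\beta=\ell+(d-2)/2$, converts this surviving gradient integral into $\frac{\omega_d}{2^{\beta+1}}\int_{-1}^1(1+t)^{\beta+1}\psi_j'(t)\psi_k'(t)\,dt$. The $\lambda$-weighted piece transforms, via the same substitution and the identity $\psi_j\psi_k=(1-t)^2 q_jq_k$, into $\frac{\lambda\,\omega_d}{2^{\kappa+\beta+4}}\int_{-1}^1(1-t)^{\kappa+2}(1+t)^\beta q_j(t)q_k(t)\,dt$. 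Both pieces share the common prefactor $\omega_d/2^{\beta+1}$, so the full ball Sobolev inner product equals $\frac{\omega_d}{2^{\beta+1}}\,(q_j,q_k)_\beta$, and hence Sobolev orthogonality of the $R_{j,\nu}^n$ is equivalent to orthogonality of the $q_j$ in $(\cdot,\cdot)_\beta$. The main obstacle is the hidden cancellation in the middle paragraph: without the vanishing of $\Phi_j$ at $r=1$ (i.e., the Dirichlet factor) and the specific coefficient $\ell(\ell+d-2)$ from the spherical Laplacian's eigenvalues, the ball inner product would not reduce to a univariate Sobolev inner product of the clean form stated, but to one containing extra mixed terms in $q_j$ and $q_j'$.
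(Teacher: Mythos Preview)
Your proof is correct but proceeds along a genuinely different route from the paper's. For the gradient term, the paper first applies Green's identity on the ball, converting $\int_{\mathbb{B}^d}\nabla F\cdot\nabla G\,dx$ into $-\int_{\mathbb{B}^d}(\Delta F)\,G\,dx$ (the boundary term drops because of the factor $1-\|x\|^2$), then computes $\Delta\bigl((1-\|x\|^2)\,q_j\,Y\bigr)$ in Cartesian form using Euler's identity $x\cdot\nabla Y=\ell Y$ and $\Delta Y=0$; only afterward does it pass to polar coordinates and perform a further radial integration by parts to reach $4\int_0^1 f_j'(r^2)f_k'(r^2)\,r^{2\ell+d+1}\,dr$. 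You instead work in polar form from the outset, using the orthogonal splitting $\nabla=\hat r\,\partial_r + r^{-1}\nabla_S$ and the spherical-Laplacian eigenvalue relation for the angular part, and then obtain the same radial collapse via a single integration by parts of $\ell\,r^{2\ell+d-2}(\Phi_j\Phi_k)'$.

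The two computations are equivalent in substance: the paper's cancellation among the terms of $\Delta\bigl(f_j(\|x\|^2)Y\bigr)$ (Laplacian of the radial factor, Euler cross term, harmonicity of $Y$) is exactly your algebraic cancellation $\ell^2-\ell(2\ell+d-2)+\ell(\ell+d-2)=0$ between the direct, IBP, and spherical-eigenvalue contributions. Your approach is a bit more streamlined in that it never writes out the full Laplacian of the product and makes the role of the Dirichlet condition $\Phi_j(1)=0$ explicit at the precise step where it is needed; the paper's approach has the minor advantage of relying only on Cartesian identities (product rule for $\Delta$, Euler's formula) until the very end, so no facts about $\nabla_S$ or $\Delta_S$ need to be invoked. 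Both arrive at the same constant $\omega_d/2^{\beta+1}$ in front of $(q_j,q_k)_\beta$.
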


\begin{proof}
From \eqref{Sob_ip} we get
\begin{align}
\langle R_{j,\nu}^{n}(x),R_{k,\eta}^{m}(x)\rangle_{S} =& \lambda\, \int_{\mathbb{B}^d} R_{j,\nu}^{n}(x) R_{k,\eta}^{m}(x) (1-\|x\|^2)^{\kappa + 2} dx \nonumber \\
   &+ \int_{\mathbb{B}^d} \nabla ((1-\|x\|^2)R_{j,\nu}^{n}(x))\,\cdot \nabla ((1-\|x\|^2)R_{k,\eta}^{m}(x)) dx. \label{Sob_ip_mod}
\end{align}
The proof uses the following identity: 
\begin{equation}\label{changevar}
        \int_{\mathbb{B}^d} f(x) dx = \int_0^1 r^{d-1}\int_{\mathbb{S}^{d-1}}f(r\,\xi)\, d\sigma (\xi)\,dr
\end{equation}
that arises from the spherical-polar coordinates $x=r\,\xi$, $\xi\in \mathbb{S}^{d-1}$. In this way, the first integral in \eqref{Sob_ip_mod} gives
\begin{align*}
I_1 =& \int_{\mathbb{B}^d} R_{j,\nu}^{n}(x) R_{k,\eta}^{m}(x) (1-\|x\|^2)^{\kappa+2} dx =\\
   =& \int_{0}^1 q_{j}(2 r^2 -1) q_{k}(2 r^2 -1) (1-r^2)^{\kappa + 2} r^{n-2j+m-2k+d-1} dr \times \\
   &\times \int_{\mathbb{S}^{d-1}}  Y_\nu^{n-2j}(\xi) Y_\nu^{m-2k}(\xi) d\sigma(\xi) \quad \text{ (with } n = m, j = k) \\
=& \int_{0}^1 q_{j}(2 r^2 -1) q_{k}(2 r^2 -1) (1-r^2)^{\kappa + 2} r^{2n-4j+d-1} dr\\ 
& \times \omega_d \delta_{n-2j,m-2k} \delta_{\nu,\eta} \\
=& \int_{0}^1 q_{j}(t) q_{k}(t) \left(\frac{1-t}{2}\right)^{\kappa + 2} \left(\frac{1+t}{2}\right)^{\frac{2n-4j+d-2}{2}}\frac{1}{4} dt\\ 
& \times \omega_d \delta_{n-2j,m-2k} \delta_{\nu,\eta} \\
=& \frac{1}{2^{\kappa+3+n-2j+d/2}} \int_{-1}^1 q_{j}(t) q_{k}(t) (1-t)^{\kappa + 2} (1+t)^{n-2j+\frac{d}{2}-1} dt\\
&\times \omega_d\delta_{n-2j,m-2k} \delta_{\nu,\eta},
\end{align*}
after the change of variable \(t = 2r^2 - 1\).

For the second integral, integration by parts gives
\begin{align*}
I_2 =& \int_{\mathbb{B}^d} \nabla ((1-\|x\|^2)R_{j,\nu}^{n}(x))\,\cdot \nabla ((1-\|x\|^2)R_{k,\eta}^{m}(x)) dx \\
=& - \int_{\mathbb{B}^d} \Delta ((1-\|x\|^2)R_{j,\nu}^{n}(x))(1-\|x\|^2)R_{k,\eta}^{m}(x) dx \\
& + \int_{\mathbb{B}^d} \dive \left(\nabla((1-\|x\|^2)R_{j,\nu}^{n}(x))(1-\|x\|^2)R_{k,\eta}^{m}(x)\right) dx =\\
=& - \int_{\mathbb{B}^d} \Delta ((1-\|x\|^2)R_{j,\nu}^{n}(x))(1-\|x\|^2)R_{k,\eta}^{m}(x) dx,
\end{align*}
which follows from the divergence theorem and the fact that \(1-\|x\|^2\) vanishes on the sphere.

Now, let us write
\begin{align*}
f_j(\|x\|^2) &:= (1-\|x\|^2)  q_{j}(2 r^2 -1)  
\end{align*}
in this way 
\begin{align*}
\Delta &((1-\|x\|^2)R_{j,\nu}^{n}(x)) = \Delta \left( f_j(\|x\|^2) Y_\nu^{n-2j}(x)\right) = \sum_{i=1}^d \partial_i^2 \left( f_j(\|x\|^2) Y_\nu^{n-2j}(x)\right)\\
&= \Delta (f_j(\|x\|^2)) Y_\nu^{n-2j}(x)+ 4 f_j'(\|x\|^2) \langle  x,\nabla\rangle Y_\nu^{n-2j}(x)
+ f_j(\|x\|^2) \Delta Y_\nu^{n-2j}(x) \\
& = \left[ 4 \|x\|^2 f_j''(\|x\|^2)) + 2(2n-4j + d) f_j'(\|x\|^2)\right] Y_\nu^{n-2j}(x),
\end{align*}
since  
\begin{align*}\langle x,\nabla\rangle Y_\nu^{n-2j}(x) &= (n-2j) Y_\nu^{n-2j}(x),\\
\Delta Y_\nu^{n-2j}(x) &= 0,\\
\Delta (f_j(\|x\|^2)) &= 2 d f_j'(\|x\|^2) + 4 \|x\|^2 f_j''(\|x\|^2).
\end{align*}

Thus, using polar coordinates $ x = r \xi$, for the second integral we obtain 
\begin{align*}
I_2 =& - \int_{\mathbb{B}^d} \left[ 4 \|x\|^2 f_j''(\|x\|^2)) + 2(2n-4j + d) f_j'(\|x\|^2)\right] f_k(\|x\|^2) Y_\nu^{n-2j}(x) Y_\eta^{m-2k}(x)dx\\
=& - \int_0^1 \left[ 4 r^2 f_j''(r^2)) + 2(n-2j + d) f_j'(r^2)\right] f_k(r^2) r^{2n-4j+d-1} dr \\
&\times \omega_d \delta_{n-2j,m-2k} \delta_{\nu,\eta}.
\end{align*}
Next, since \(f_k(r^2) r^{2n-4j+d}\) vanishes at the ends of the interval \([0,1]\), integration by parts gives
\begin{align*}
&\int_0^1 4 r f_j''(r^2) f_k(r^2) r^{2n-4j+d} dr =
- 2 \int_0^1  f_j'(r^2) \frac{d}{dr}\left(f_k(r^2) r^{2n-4j+d}\right) dr \\
&= - 4 \int_0^1  f_j'(r^2) f_k'(r^2) r^{2n-4j+d+1} dr  -
2(2n-4j+d) \int_0^1  f_j'(r^2) f_k(r^2) r^{2n-4j+d-1} dr,
\end{align*}
and we get
\[
I_2 = 4 \int_0^1  f_j'(r^2) f_k'(r^2) r^{2n-4j+d+1} dr \omega_d \delta_{n-2j,m-2k} \delta_{\nu,\eta}
\]
Finally, using the change of variables \( t = 2 r^2 -1\), we deduce
\begin{align*}
 4 \int_0^1  f_j'(r^2)& f_k'(r^2) r^{2n-4j+d+1} dr \\
=& \int_0^1 \frac{d}{dr}f_j(r^2) \frac{d}{dr}f_k(r^2)r^{2n-4j+d-1} dr \\
=& \frac{1}{2^{n-2j+d/2}} \int_{-1}^1 \left(- q_j(t) +  (1-t)q_j'(t)\right) \\
& \times \left(- q_k(t) +  (1-t)q_k'(t)\right) (1+t)^{n-2j+d/2} dt \\
=& \frac{1}{2^{n-2j+d/2}} \int_{-1}^1 \frac{d}{dt}\left((1-t)q_j(t)\right) \frac{d}{dt}\left((1-t)q_k(t)\right) (1+t)^{n-2j+d/2} dt,
\end{align*}
and the result follows.
\end{proof}

\section{The radial parts}
\setcounter{equation}{0}

From now on, we will denote by \(q_{k}^{(\beta)}\) the polynomial of degree \(k\) 
orthogonal with respect to the Sobolev inner product \eqref{sob_ip_beta},
and normalized in such a way that it has the same leading coefficient as the Jacobi polynomial $P_k^{(1,\beta)}(t)$, that is,
\begin{align}\label{qk-defn}
   q_{k}^{(\beta)}(t) = k^{(1, \beta)}_k\, t^k + \ldots, 
      \quad k_k^{(1,\beta)} := \frac{1}{2^k} \binom{2k + \beta + 1}{k}. 
\end{align}
With this normalization, we have $q_{0}^{(\beta)}(t) = P_{0}^{(1,\beta)}(t)= 1$.  
Our next result relates $q_k^{(\beta)}$ with the Jacobi polynomials.

\begin{prop}
Let \(\beta >-1\). Then, for $k\ge 0$, 
\begin{equation} \label{connect_qj}
P_{k}^{(1, \beta)}(t) = q_k^{(\beta)}(t) + \sum_{j=k-(\kappa+1)}^{k-1}  a^{(\beta)}_{k,j} q_j^{(\beta)}(t) \quad \text{with}\quad
    a^{(\beta)}_{k,j} = \frac{(P_{k}^{(1, \beta)}, q_j^{(\beta)})_{\beta} } { (q_{j}^{(\beta)},q_{j}^{(\beta)})_{\beta} }.
\end{equation}
\end{prop}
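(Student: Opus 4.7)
The argument rests on two observations. First, by the normalization \eqref{qk-defn} the polynomials $P_k^{(1,\beta)}$ and $q_k^{(\beta)}$ share the same leading coefficient, so $P_k^{(1,\beta)} - q_k^{(\beta)}$ has degree at most $k-1$ and admits an expansion in the basis $\{q_0^{(\beta)},\dots,q_{k-1}^{(\beta)}\}$. Writing
\[
P_k^{(1,\beta)}(t) = q_k^{(\beta)}(t) + \sum_{j=0}^{k-1} a_{k,j}^{(\beta)}\,q_j^{(\beta)}(t),
\]
the mutual orthogonality of the $q_j^{(\beta)}$ under $(\cdot,\cdot)_\beta$ instantly gives the displayed Fourier formula for $a_{k,j}^{(\beta)}$. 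The substantive content of the proposition is therefore the vanishing $a_{k,j}^{(\beta)}=0$ for $j\le k-\kappa-2$, which I plan to obtain via the stronger assertion that $(P_k^{(1,\beta)},q)_\beta=0$ for every polynomial $q$ with $\deg q\le k-\kappa-2$.

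Split $(P_k^{(1,\beta)},q)_\beta = I_1+I_2$ according to the two summands in \eqref{sob_ip_beta}. For $I_1$, regroup the weight as $(1-t)^{\kappa+2}(1+t)^\beta=(1-t)^{\kappa+1}\cdot(1-t)(1+t)^\beta$; the test polynomial $q(t)(1-t)^{\kappa+1}$ has degree at most $(k-\kappa-2)+(\kappa+1)=k-1$, so $I_1=0$ by the Jacobi orthogonality of $P_k^{(1,\beta)}$ with respect to $(1-t)(1+t)^\beta$. For $I_2$, set $F(t):=(1-t)P_k^{(1,\beta)}(t)$ and $G(t):=(1-t)q(t)$ and integrate by parts once; the boundary terms vanish because $F(1)=0$ and $(1+t)^{\beta+1}|_{t=-1}=0$ (using $\beta>-1$), leaving
\[
I_2 = -\int_{-1}^1 F(t)\bigl[(1+t)G''(t)+(\beta+1)G'(t)\bigr](1+t)^\beta\,dt.
\]

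The bracketed factor is a polynomial of degree at most $\deg G-1=\deg q$, so reabsorbing the remaining $(1-t)$ factor from $F$ identifies $I_2$ with the integral of $P_k^{(1,\beta)}$ against a polynomial of degree at most $k-\kappa-2<k$ with respect to the Jacobi weight $(1-t)(1+t)^\beta$; this too vanishes, completing the argument. The only delicate point is the degree bookkeeping after integration by parts---the key observation being that both pieces of the Sobolev inner product \eqref{sob_ip_beta} carry enough extra $(1-t)$ factors (respectively $\kappa+1$ in the mass term and $2$ in the gradient term) relative to $(1-t)(1+t)^\beta$ to ensure the resulting test polynomials end up with degree $<k$ whenever $\deg q\le k-\kappa-2$.
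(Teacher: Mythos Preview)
Your argument is correct. The treatment of $I_1$ matches the paper's exactly, and your claim $a_{k,j}^{(\beta)}=0$ for $j\le k-\kappa-2$ follows as stated. (A minor remark: your integration-by-parts computation for $I_2$ in fact yields $I_2=0$ for every $q$ of degree $\le k-1$, not merely $\le k-\kappa-2$; the binding constraint comes solely from $I_1$. Your closing sentence about ``$2$ extra $(1-t)$ factors in the gradient term'' slightly understates this.)

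The route for $I_2$ is where you diverge from the paper. The paper invokes the identity \eqref{jacobi-2}, specialized to $\alpha=1$, to obtain
\[
\frac{d}{dt}\bigl((1-t)P_k^{(1,\beta)}(t)\bigr)=-(k+1)P_k^{(0,\beta+1)}(t),
\]
so that $I_2$ is immediately the $\omega_{0,\beta+1}$-inner product of $P_k^{(0,\beta+1)}$ against a polynomial of degree $\le k-1$, hence zero. You instead integrate by parts once, shifting the derivative onto $G$ and the weight, and then appeal to the $\omega_{1,\beta}$-orthogonality of $P_k^{(1,\beta)}$. Your approach is more self-contained, requiring no preparatory Jacobi identities beyond the basic orthogonality. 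The paper's approach, on the other hand, isolates the exact form of $\frac{d}{dt}((1-t)P_k^{(1,\beta)})$ as a single Jacobi polynomial; this identity \eqref{d_Pk} is reused downstream (for the explicit norm computation \eqref{den_akk} and throughout the $\kappa=0$ analysis), so establishing it once pays dividends later in the paper.
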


\begin{proof}
Let us expand $P_k^{(1,\beta)}(t)$ in terms of the sequence of polynomials
\(\{q_j^{(\beta)}(t)\}_{j\geqslant 0}\)
which are orthogonal with respect to \((\cdot,\cdot)_{\beta}\)
\begin{equation} \label{suma_qj}
P_{k}^{(1, \beta)}(t) = \sum_{j=0}^k  a^{(\beta)}_{k,j} q_j^{(\beta)}(t) \quad \text{with}\quad
    a^{(\beta)}_{k,j} = \frac{(P_{k}^{(1, \beta)}, q_j^{(\beta)})_{\beta} } { (q_{j}^{(\beta)},q_{j}^{(\beta)})_{\beta} }.
\end{equation}
Since $q_k^{(\beta)}$ is normalized so that its leading coefficient is the same as the one of $P_k^{(1,{\beta})}$, it follows that  $a^{(\beta)}_{k,k} = 1.$
We now {calculate} $a^{(\beta)}_{k,j}$ for $0 \le j < k$. We need to compute
\begin{align}
(P_{k}^{(1, \beta)}, q_j^{(\beta)})_{\beta} &= \frac{\lambda}{2^{\kappa+3}} \int_{-1}^1
P_{k}^{(1, \beta)}(t)\, q_j^{(\beta)}(t)(1-t)^{\kappa+1} \omega_{1,\beta}(t) dt + \nonumber\\
& + \int_{-1}^1 \frac{d}{dt}\left((1-t)P_{k}^{(1, \beta)}(t)\right)\, \frac{d}{dt}\left((1-t)q_j^{(\beta)}(t)\right) \omega_{0,\beta+1}(t) dt. \label{nun_ajk}
\end{align}
From \eqref{jacobi-2} we have
\begin{equation} \label{d_Pk}
\frac{d}{dt}\left((1-t)P_{k}^{(1, \beta)}(t)\right) = - (k+1)P_{k}^{(0, \beta+1)}(t). 
\end{equation}
Therefore the second summand in \eqref{nun_ajk} is null for \(0\leqslant j <k\), whereas the first integral also vanishes for \(0\leqslant j <k-(\kappa+1)\). In this way expression \eqref{suma_qj} contains only \(\kappa+2\) terms.
\end{proof}

Of course, the connection formula \eqref{connect_qj} can be identified as the Gram--Schmidt algorithm applied to the sequence \(\{P_{k}^{(1, \beta)}(t)\}_{k\geqslant 0}\) to obtain an orthogonal basis with respect to the inner product \((\cdot, \cdot)_{\beta}\), but here, the algorithm uses only the \(\kappa+1\) previous polynomials.

\subsection{An alternative algorithm}

We show that the coefficients in \eqref{connect_qj} can be obtained recursively. The key for this algorithm is a \(2\kappa+3\) term recurrence formula for the Jacobi polynomials \(\{P_{k}^{(1, \beta)}(t)\}_{k\geqslant 0}\).
With \(\alpha = 1\), the three-term recurrence relation \eqref{three-term} can be written as
\begin{equation} \label{three-term-alpha-1}
(1-t)P_k^{(1,\beta)}(t) = a_k P_{k+1}^{(1,\beta)}(t) + b_k P_k^{(1,\beta)}(t) + c_k P_{k-1}^{(1,\beta)}(t),
\end{equation}
where
\begin{align*}
a_k =& -\frac{2(k+1)(k + \beta+2)}{(2k + \beta+2)(2k +\beta+3)},\\
b_k =&  \frac{4(k+1)(k + \beta+1)}{(2k + \beta+1)(2k +\beta+3)}, \\
c_k =&-\frac{2(k+1)(k + \beta)}{(2k + \beta+1)(2k +\beta+2)}.
\end{align*}
Iterating \eqref{three-term-alpha-1} we can obtain a representation for \((1-t)^{\kappa+1}P_k^{(1,\beta)}(t)\) in terms of the Jacobi polynomials as follows:
\begin{equation} \label{coef:gamma}
(1-t)^{\kappa+1}P_k^{(1,\beta)}(t) = \sum_{j=k-\kappa-1}^{k+\kappa+1} \gamma_{k,j}^{(\kappa+1)} P_j^{(1,\beta)}(t),
\end{equation}
where the coefficients \(\gamma_{k,j}^{(\kappa)}\) satisfy the recurrence
\begin{align*}
\gamma_{k,j}^{(h+1)} =&  c_{j+1} \gamma_{k,j+1}^{(h)} + b_j \gamma_{k,j}^{(h)}+ a_{j-1} \gamma_{k,j-1}^{(h)},
\end{align*}
for  \(h = 0, \ldots \kappa\), assuming \(\gamma_{k,j}^{(h)}= 0\) for \(j>k+h+1\) or \(j<k-h-1\), and the initial conditions
\begin{align*}
\gamma_{k,k+1}^{(0)} &= a_k, \quad
\gamma_{k,k}^{(0)} = b_k, \quad
\gamma_{k,k-1}^{(0)} = c_k.
\end{align*}

Let us assume \(k \geqslant \kappa+1\). Next, we compute the numerator of \(a^{(\beta)}_{k,j}\) for \(k-\kappa-1 \leqslant j < k\) in \eqref{suma_qj}. 

\begin{prop}
Let us denote
\begin{align*}
 A_k &= ((P_{k}^{(1, \beta)}, q_{k-\kappa-1}^{(\beta)})_{\beta}, \ldots, (P_{k}^{(1, \beta)}, q_{k-1}^{(\beta)})_{\beta})^t,\\
 \Gamma_k &= (\gamma_{k,k-\kappa-1}^{(\kappa+1)}, \ldots, \gamma_{k,k-1}^{(\kappa+1)})^t,
\end{align*}
then 
\[
A_k = C_k \Gamma_k
\]
where \(C_k\) is a lower banded triangular matrix with entries 
\[
c_{i,j} = \frac{\lambda}{2^{\kappa+3}} \int_{-1}^1
q_i^{(\beta)}(t) \, P_{j}^{(1, \beta)}(t)\,
 \omega_{1,\beta}(t) dt,
\] 
for \(i=k-\kappa-1,\ldots,k-1\) and \(j=k-\kappa-1,\ldots,i\).
\end{prop}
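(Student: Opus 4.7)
The plan is to compute each component of $A_k$ directly from the Sobolev inner product and then substitute the Jacobi-polynomial expansion \eqref{coef:gamma}, using orthogonality to force the matrix to be lower triangular.

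First, I would dispose of the derivative part of the Sobolev inner product. For every $i$ in the range $k-\kappa-1 \le i \le k-1$, the polynomial $\frac{d}{dt}((1-t)q_i^{(\beta)}(t))$ has degree at most $i \le k-1$. By \eqref{d_Pk}, $\frac{d}{dt}((1-t)P_k^{(1,\beta)}(t)) = -(k+1)P_k^{(0,\beta+1)}(t)$, which is orthogonal to every polynomial of degree less than $k$ with respect to $\omega_{0,\beta+1}$. Hence the second summand of \eqref{nun_ajk} vanishes and
\[
(P_k^{(1,\beta)}, q_i^{(\beta)})_\beta \;=\; \frac{\lambda}{2^{\kappa+3}} \int_{-1}^1 P_k^{(1,\beta)}(t)\, q_i^{(\beta)}(t)\,(1-t)^{\kappa+1}\,\omega_{1,\beta}(t)\,dt.
\]

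Second, I would insert the representation \eqref{coef:gamma} for $(1-t)^{\kappa+1}P_k^{(1,\beta)}(t)$ into this integral and interchange sum and integral, obtaining
\[
(P_k^{(1,\beta)}, q_i^{(\beta)})_\beta \;=\; \sum_{\ell=k-\kappa-1}^{k+\kappa+1} \gamma_{k,\ell}^{(\kappa+1)}\,\frac{\lambda}{2^{\kappa+3}} \int_{-1}^1 q_i^{(\beta)}(t)\,P_\ell^{(1,\beta)}(t)\,\omega_{1,\beta}(t)\,dt.
\]
The integral on the right is exactly the proposed $c_{i,\ell}$.

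Third, I would truncate the sum using Jacobi orthogonality. Since $q_i^{(\beta)}$ has degree $i$ and $P_\ell^{(1,\beta)}$ is orthogonal to all polynomials of degree less than $\ell$ with respect to $\omega_{1,\beta}$, the integral $c_{i,\ell}$ vanishes whenever $\ell > i$. Because $i \le k-1 < k+\kappa+1$, this cuts the sum down to $\ell \in \{k-\kappa-1,\ldots,i\}$, which yields
\[
(P_k^{(1,\beta)}, q_i^{(\beta)})_\beta \;=\; \sum_{\ell=k-\kappa-1}^{i} c_{i,\ell}\,\gamma_{k,\ell}^{(\kappa+1)}, \qquad i=k-\kappa-1,\ldots,k-1,
\]
which is precisely the $i$-th entry of the matrix product $C_k\Gamma_k$, and simultaneously exhibits $C_k$ as the lower triangular $(\kappa+1)\times(\kappa+1)$ matrix described in the statement.

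There is no real obstacle here: the whole argument is a bookkeeping computation. The only point that deserves a quick verification is that the assumption $k\ge \kappa+1$ keeps all indices $\ell$ in \eqref{coef:gamma} nonnegative, so that Jacobi orthogonality applies without modification and every $c_{i,\ell}$ in the stated range is well defined.
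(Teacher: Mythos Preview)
Your proof is correct and follows exactly the same route as the paper: eliminate the derivative term of the Sobolev inner product via \eqref{d_Pk}, substitute the expansion \eqref{coef:gamma}, and truncate using the orthogonality of $P_\ell^{(1,\beta)}$ with respect to $\omega_{1,\beta}$. Your write-up is in fact slightly more explicit about why each step works (in particular the vanishing of $c_{i,\ell}$ for $\ell>i$ and the role of the assumption $k\ge\kappa+1$), but the argument is the same.
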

\begin{proof}
From \eqref{d_Pk}, we obtain
\begin{align}
(P_{k}^{(1, \beta)}, q_i^{(\beta)})_{\beta} &= \frac{\lambda}{2^{\kappa+3}} \int_{-1}^1
P_{k}^{(1, \beta)}(t)\, q_i^{(\beta)}(t)(1-t)^{\kappa+1}
 \omega_{1,\beta}(t) dt \nonumber\\
&= \sum_{j=k-\kappa-1}^{k+\kappa+1} \gamma_{k,j}^{(\kappa+1)} \frac{\lambda}{2^{\kappa+3}} \int_{-1}^1 
q_i^{(\beta)}(t) \, P_{j}^{(1, \beta)}(t)\,
 \omega_{1,\beta}(t) dt \label{Pk_qi}\\
 &= \sum_{j=k-\kappa-1}^{i} \gamma_{k,j}^{(\kappa+1)} c_{i,j},\nonumber
\end{align}
where
\[
c_{i,j} = \frac{\lambda}{2^{\kappa+3}} \int_{-1}^1
q_i^{(\beta)}(t) \, P_{j}^{(1, \beta)}(t)\,
 \omega_{1,\beta}(t) dt.
\]
Observe that \(c_{i,j} =0\) if \(i<j\).
\end{proof}

To compute recursively \(A_k\) we don't need to compute all the integrals in \(C_k\) since the first \(\kappa\) rows in the matrix are known from the previous iterations and the last row 
can be obtained from \eqref{connect_qj} with \(k\) replaced by \(k-1\). In fact, for \(k-\kappa-1\leqslant j<k-1\) we have
\begin{align*}
c_{k-1,j} &= \frac{\lambda}{2^{\kappa+3}} \int_{-1}^1
q_{k-1}^{(\beta)}(t) \, P_{j}^{(1, \beta)}(t)\,
 \omega_{1,\beta}(t) dt \\
 &=\frac{\lambda}{2^{\kappa+3}} \int_{-1}^1
 P_{j}^{(1, \beta)}(t)\left( P_{k-1}^{(1, \beta)}(t) - \sum_{h=k-\kappa-2}^{k-2}  a^{(\beta)}_{k-1,h} q_h^{(\beta)}(t)\right)\,
 \omega_{1,\beta}(t) dt \\
&= - \sum_{h=k-\kappa-2}^{k-2} a^{(\beta)}_{k-1,h} c_{h,j}.
\end{align*}

The diagonal entries in \(C_k\) satisfy
\[
c_{i,i} = \frac{\lambda}{2^{\kappa+3}} \int_{-1}^1
q_i^{(\beta)}(t) \, P_{i}^{(1, \beta)}(t)\,
 \omega_{1,\beta}(t) dt = \frac{\lambda}{2^{\kappa+3}} h_{i}^{(1, \beta)}.
\]
\medskip

Finally, the denominator of \(a^{(\beta)}_{k,j}\) for $k-\kappa-1 \leqslant j < k$ in \eqref{suma_qj} can be obtained recursively. For every value \(k\) we have
\begin{align}
(q_k^{(\beta)}, q_k^{(\beta)})_{\beta} &= (P_{k}^{(1, \beta)}, q_k^{(\beta)})_{\beta} = \frac{\lambda}{2^{\kappa+3}} \int_{-1}^1
P_{k}^{(1, \beta)}(t)\, q_k^{(\beta)}(t)(1-t)^{\kappa+1} \omega_{1,\beta}(t) dt + \nonumber\\
& + \int_{-1}^1 \frac{d}{dt}\left((1-t)P_{k}^{(1, \beta)}(t)\right)\, \frac{d}{dt}\left((1-t)q_k^{(\beta)}(t)\right) \omega_{0,\beta+1}(t) dt \label{den_akk}
\end{align}
Since \(P_{k}^{(1, \beta)}(t)\) and \(q_k^{(\beta)}(t)\) have the same leading coefficient, from \eqref{jacobi-2} we deduce 
\[
\int_{-1}^1 \frac{d}{dt}\left((1-t)P_{k}^{(1, \beta)}(t)\right)\, \frac{d}{dt}\left((1-t)q_k^{(\beta)}(t)\right) \omega_{0,\beta+1}(t) dt = (k+1)^2 h_{k}^{(0, \beta+1)}.
\]
Now, for the first integral in \eqref{den_akk} it holds
\begin{align}
\frac{\lambda}{2^{\kappa+3}} & \int_{-1}^1
P_{k}^{(1, \beta)}(t)\, q_k^{(\beta)}(t)(1-t)^{\kappa+1} \omega_{1,\beta}(t) dt \nonumber\\
&= \frac{\lambda}{2^{\kappa+3}} \int_{-1}^1 \left(
\sum_{j=k-\kappa-1}^{k+\kappa+1} \gamma_{k,j}^{(\kappa+1)} P_j^{(1,\beta)}(t)\right)\, q_k^{(\beta)}(t) \omega_{1,\beta}(t) dt \label{prim_int}\\
& = \sum_{j=k-\kappa-1}^{k} \gamma_{k,j}^{(\kappa+1)} c_{k,j},\nonumber
\end{align}
and therefore we conclude
\begin{align*}
(q_k^{(\beta)}, q_k^{(\beta)})_{\beta} &= \sum_{j=k-\kappa-1}^{k} \gamma_{k,j}^{(\kappa+1)} c_{k,j} + (k+1)^2 h_{k}^{(0, \beta+1)}. 
\end{align*}

\medskip

The case \(0<k<\kappa+1\) follows exactly in the same way, just taking into account that \(P_{j}^{(1, \beta)}(t) = q_j^{(\beta)}(t) = 0\) for \(j<0\) and  all the sums in the previous reasoning starts at \(j = 0\).

\subsection{The case \(\kappa = 0\)}

In the case \(\kappa = 0\) relation \eqref{connect_qj} contains only two terms and we can give a closed representation for the coefficients.

\begin{prop}
Let \(\kappa = 0\) and \(\beta >-1\). Then, for $k\ge 0$, 
\begin{equation}\label{1-2-one-v}
P_k^{(1, \beta)}(t) = q_k^{(\beta)}(t) + d_{k}^{(\beta)}q_{k-1}^{(\beta)}(t),
\end{equation}
where $d_k^{(\beta)}$ is defined by means
\begin{equation}\label{def-d}
d_{0}^{(\beta)}=0, \quad d_{k}^{(\beta)} = - \frac{\lambda k (k + \beta+1)}{4(2 k + \beta) ( 2 k + \beta+1)}\frac{h_{k}^{(1,\beta)}}{\widetilde h_{k-1}^{(\beta)}},\quad k = 1,2, \ldots, 
\end{equation}
with \(\widetilde h_{k-1}^{(\beta)} = (q_{k-1}^{(\beta)},q_{k-1}^{(\beta)})_{\beta}\). 
\end{prop}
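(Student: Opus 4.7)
The plan is to specialize the connection formula \eqref{connect_qj} from the previous proposition to the case $\kappa = 0$, in which only the two terms $j = k$ and $j = k-1$ remain, yielding
\[
P_k^{(1,\beta)}(t) = q_k^{(\beta)}(t) + a^{(\beta)}_{k,k-1}\, q_{k-1}^{(\beta)}(t),
\]
so that I only need to evaluate $d_k^{(\beta)} := a^{(\beta)}_{k,k-1} = (P_k^{(1,\beta)}, q_{k-1}^{(\beta)})_{\beta} / \widetilde h_{k-1}^{(\beta)}$ in closed form.

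To compute the numerator I split $(P_k^{(1,\beta)}, q_{k-1}^{(\beta)})_{\beta}$ into its two pieces as in \eqref{nun_ajk}. The gradient piece vanishes: by \eqref{d_Pk} the derivative $\frac{d}{dt}[(1-t)P_k^{(1,\beta)}(t)] = -(k+1) P_k^{(0,\beta+1)}(t)$ is a Jacobi polynomial of degree $k$ for the weight $w_{0,\beta+1}$, while $\frac{d}{dt}[(1-t)q_{k-1}^{(\beta)}(t)]$ is a polynomial of degree at most $k-1$, so classical Jacobi orthogonality on $w_{0,\beta+1}$ kills the integral. This leaves only the weighted $L^2$ piece
\[
(P_k^{(1,\beta)}, q_{k-1}^{(\beta)})_{\beta} = \tfrac{\lambda}{8} \int_{-1}^1 \bigl[(1-t)P_k^{(1,\beta)}(t)\bigr]\, q_{k-1}^{(\beta)}(t)\, w_{1,\beta}(t)\, dt.
\]

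For this surviving integral I use the three-term recurrence \eqref{three-term-alpha-1} to write $(1-t)P_k^{(1,\beta)}(t) = a_k P_{k+1}^{(1,\beta)}(t) + b_k P_k^{(1,\beta)}(t) + c_k P_{k-1}^{(1,\beta)}(t)$. Jacobi orthogonality under $w_{1,\beta}$ applied to $P_{k+1}^{(1,\beta)}$ and $P_k^{(1,\beta)}$ (both paired against the degree-$(k-1)$ polynomial $q_{k-1}^{(\beta)}$) discards the first two terms, and in the third term the fact that $q_{k-1}^{(\beta)}$ and $P_{k-1}^{(1,\beta)}$ share the same leading coefficient lets me replace $q_{k-1}^{(\beta)}$ by $P_{k-1}^{(1,\beta)}$ up to a lower-degree remainder, yielding
\[
(P_k^{(1,\beta)}, q_{k-1}^{(\beta)})_{\beta} = \tfrac{\lambda}{8}\, c_k\, h_{k-1}^{(1,\beta)}.
\]

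Finally I substitute $c_k = -\dfrac{2(k+1)(k+\beta)}{(2k+\beta+1)(2k+\beta+2)}$ and divide by $\widetilde h_{k-1}^{(\beta)}$. The cosmetic discrepancy with the claimed form is that the statement uses $h_k^{(1,\beta)}$ rather than $h_{k-1}^{(1,\beta)}$; the two are converted via the explicit ratio obtained from \eqref{normJ},
\[
\frac{h_{k-1}^{(1,\beta)}}{h_k^{(1,\beta)}} = \frac{(2k+\beta+2)\, k\, (k+\beta+1)}{(2k+\beta)\, (k+1)\, (k+\beta)},
\]
and after cancellation the stated value of $d_k^{(\beta)}$ drops out. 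The only step requiring any care is bookkeeping this Gamma-function algebra; everything else is direct application of Jacobi orthogonality. The base case $d_0^{(\beta)} = 0$ corresponds to the normalization $q_0^{(\beta)} = P_0^{(1,\beta)} = 1$.
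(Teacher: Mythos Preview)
Your proof is correct and follows the paper's argument almost exactly: specialize \eqref{connect_qj} to $\kappa=0$, kill the derivative term via \eqref{d_Pk}, and evaluate the remaining weighted integral by Jacobi orthogonality. The only difference is cosmetic: the paper groups the factor $(1-t)$ with $q_{k-1}^{(\beta)}$ and reads off the leading coefficient, obtaining $-\frac{\lambda}{8}\,\frac{k_{k-1}^{(1,\beta)}}{k_k^{(1,\beta)}}\,h_k^{(1,\beta)}$ directly in terms of $h_k^{(1,\beta)}$, whereas you group $(1-t)$ with $P_k^{(1,\beta)}$ via the three-term recurrence \eqref{three-term-alpha-1}, land on $\frac{\lambda}{8}\,c_k\,h_{k-1}^{(1,\beta)}$, and then convert $h_{k-1}^{(1,\beta)}$ to $h_k^{(1,\beta)}$; the paper's grouping saves that last conversion step but the substance is identical.
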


\begin{proof}
In this case relation \eqref{connect_qj} contains only two terms and reduces to 
\[
P_{k}^{(1, \beta)}(t) = \sum_{j=k-1}^k  a^{(\beta)}_{k,j} q_j^{(\beta)}(t) \quad \text{with}\quad
    a^{(\beta)}_{k,j} = \frac{(P_{k}^{(1, \beta)}, q_j^{(\beta)})_{\beta} } { (q_{j}^{(\beta)},q_{j}^{(\beta)})_{\beta} }.
\]
Again, since $q_k^{(\beta)}(t) = k_k^{(1,{\beta})}t^k + \ldots$, it follows that  $a^{(\beta)}_{k,k} = 1.$
To compute $a^{(\beta)}_{k,k-1}$ we proceed as before
\begin{align*}
(P_{k}^{(1, \beta)}, q_{k-1}^{(\beta)})_{\beta} &= \frac{\lambda}{8} \int_{-1}^1
P_{k}^{(1, \beta)}(t)\, q_{k-1}^{(\beta)}(t)(1-t) \omega_{1,\beta}(t) dt + \nonumber\\
& + \int_{-1}^1 \frac{d}{dt}\left((1-t)P_{k}^{(1, \beta)}(t)\right)\, \frac{d}{dt}\left((1-t)q_{k-1}^{(\beta)}(t)\right) \omega_{0,\beta+1}(t) dt, 
\end{align*}
where the second integral vanishes by virtue of \eqref{d_Pk}. Let us denote \(d_{k}^{(\beta)}:= a^{(\beta)}_{k,k-1}\). Then expression \eqref{def-d} follows by 
\begin{align*}
(P_{k}^{(1, \beta)}, q_{k-1}^{(\beta)})_{\beta} &= \frac{\lambda}{8} \int_{-1}^1
P_{k}^{(1, \beta)}(t)\, q_{k-1}^{(\beta)}(t)(1-t) \omega_{1,\beta}(t) dt   = - \frac{\lambda}{8} \frac{k_{k-1}^{(1,\beta)}}{k_{k}^{(1,\beta)}}h_{k}^{(1,\beta)}
\end{align*}
\end{proof}

Our next proposition shows that the coefficients  $d_k^{(\beta)}$ satisfy a nonlinear recursive relation. 

\begin{prop}
The coefficients $d_{k}^{(\beta)}$ satisfy the following recurrence relation:
\begin{align}
& d_1^{(\beta)} = \frac{(P_{1}^{(1, \beta)}, P_{0}^{(1,\beta)})_{\beta}}{(P_{0}^{(0, \beta)}, P_{0}^{(1,\beta)})_{\beta}},\label{d-cero}\\
& d_{k}^{(\beta)} = \frac{(P_{k}^{(1, \beta)}, P_{k-1}^{(1,\beta)})_{\beta}}{(P_{k-1}^{(1, \beta)}, P_{k-1}^{(1,\beta)})_{\beta} - d_{k-1}^{(\beta)} \, (P_{k-1}^{(1, \beta)}, P_{k-2}^{(1,\beta)})_{\beta}},\quad k\ge 2,\label{d-rec}
\end{align}
\end{prop}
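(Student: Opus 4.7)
The plan is to exploit the connection formula \eqref{1-2-one-v} and the mutual orthogonality of the $q_j^{(\beta)}$ to rewrite the coefficient $d_k^{(\beta)} = a^{(\beta)}_{k,k-1}$ purely in terms of inner products of Jacobi polynomials. The key identity I will use over and over is
\[
   q_j^{(\beta)}(t) = P_j^{(1,\beta)}(t) - d_j^{(\beta)} q_{j-1}^{(\beta)}(t),
\]
which lets me trade a $q$-polynomial for a $P$-polynomial at the cost of introducing a lower-degree $q$-polynomial; repeated application, combined with orthogonality, will collapse every expression into the form stated in \eqref{d-rec}.

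The base case \eqref{d-cero} is immediate: since $q_0^{(\beta)}(t)=1=P_0^{(1,\beta)}(t)=P_0^{(0,\beta)}(t)$, the formula for $a^{(\beta)}_{1,0}$ from \eqref{connect_qj} specialized to $k=1$, $j=0$, gives $d_1^{(\beta)}$ directly as that ratio.

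For $k\ge 2$, I would begin from the orthogonality relation $(q_k^{(\beta)},q_{k-1}^{(\beta)})_\beta = 0$. Writing $q_k^{(\beta)} = P_k^{(1,\beta)} - d_k^{(\beta)} q_{k-1}^{(\beta)}$ and solving yields
\[
   d_k^{(\beta)} \;=\; \frac{(P_{k}^{(1,\beta)},\, q_{k-1}^{(\beta)})_\beta}{(q_{k-1}^{(\beta)},\, q_{k-1}^{(\beta)})_\beta}.
\]
For the numerator I substitute $q_{k-1}^{(\beta)} = P_{k-1}^{(1,\beta)} - d_{k-1}^{(\beta)} q_{k-2}^{(\beta)}$; the trailing term drops because $P_k^{(1,\beta)} = q_k^{(\beta)} + d_k^{(\beta)} q_{k-1}^{(\beta)}$ is a combination of two $q$-polynomials of degree $\ge k-1$, hence orthogonal to $q_{k-2}^{(\beta)}$. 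This gives $(P_k^{(1,\beta)},q_{k-1}^{(\beta)})_\beta = (P_k^{(1,\beta)},P_{k-1}^{(1,\beta)})_\beta$. For the denominator I perform the same replacement on one factor, use orthogonality to kill a cross term, and get $(q_{k-1}^{(\beta)},q_{k-1}^{(\beta)})_\beta = (P_{k-1}^{(1,\beta)},q_{k-1}^{(\beta)})_\beta$. Then I replace the remaining $q_{k-1}^{(\beta)}$ by $P_{k-1}^{(1,\beta)} - d_{k-1}^{(\beta)} q_{k-2}^{(\beta)}$, and finally replace that $q_{k-2}^{(\beta)}$ by $P_{k-2}^{(1,\beta)} - d_{k-2}^{(\beta)} q_{k-3}^{(\beta)}$, noting that $(P_{k-1}^{(1,\beta)},q_{k-3}^{(\beta)})_\beta = 0$ by the same argument as before (namely, $P_{k-1}^{(1,\beta)}$ is a combination of $q_{k-1}^{(\beta)}$ and $q_{k-2}^{(\beta)}$, both orthogonal to $q_{k-3}^{(\beta)}$). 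This yields $(q_{k-1}^{(\beta)},q_{k-1}^{(\beta)})_\beta = (P_{k-1}^{(1,\beta)},P_{k-1}^{(1,\beta)})_\beta - d_{k-1}^{(\beta)}(P_{k-1}^{(1,\beta)},P_{k-2}^{(1,\beta)})_\beta$, and \eqref{d-rec} follows.

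No serious obstacle is expected; the proof is essentially bookkeeping with the two-term connection formula, and the only thing to be careful about is keeping track of when the $d_{k-2}^{(\beta)}$-term dies (for $k=2$ this is automatic since $q_{-1}^{(\beta)}=0$, so the recurrence covers that boundary cleanly).
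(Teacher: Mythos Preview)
Your argument is correct and uses essentially the same ingredients as the paper's proof: the two-term connection formula $q_j^{(\beta)} = P_j^{(1,\beta)} - d_j^{(\beta)} q_{j-1}^{(\beta)}$ together with the observation that $P_j^{(1,\beta)}\in\operatorname{span}\{q_j^{(\beta)},q_{j-1}^{(\beta)}\}$, hence $(P_j^{(1,\beta)},q_i^{(\beta)})_\beta=0$ for $i\le j-2$. The only cosmetic difference is that the paper starts from $(q_k^{(\beta)},P_{k-1}^{(1,\beta)})_\beta=0$ and iterates the connection formula twice on $q_k^{(\beta)}$ to reach the result in one line, whereas you start from $(q_k^{(\beta)},q_{k-1}^{(\beta)})_\beta=0$ and simplify numerator and denominator separately; both routes amount to the same bookkeeping.
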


\begin{proof}
Relation \eqref{d-cero} follows from \eqref{1-2-one-v} for \(k = 1\), and
\(q_0^{(\beta)}(t) = P_0^{(1, \beta)}(t) = 1\).
Next, iterate identity \eqref{1-2-one-v} to obtain
\begin{align*}
q_k^{(\beta)}(t) &= P_k^{(1, \beta)}(t) - d_{k}^{(\beta)}q_{k-1}^{(\beta)}(t)\\
& =  P_k^{(1, \beta)}(t) - d_{k}^{(\beta)}P_{k-1}^{(1, \beta)}(t) + d_{k}^{(\beta)} d_{k-1}^{(\beta)} q_{k-2}^{(\beta)}(t),
\end{align*}
and \eqref{d-rec} follows from \((q_{k}^{(\beta)}, P_{k-1}^{(1,\beta)})_{\beta}= 0\) and considering that \[(q_{k-2}^{(1, \beta)}, P_{k-1}^{(1,\beta)})_{\beta} = (P_{k-2}^{(1, \beta)}, P_{k-1}^{(1,\beta)})_{\beta}.\]
\end{proof}

\bigskip

The inner products in \eqref{d-rec} can be explicitly computed in terms of the parameters \(k_k^{(1,\beta)}\) and \(h_k^{(1,\beta)}\). In fact, using \eqref{d_Pk} we have
\begin{align*}
(P_{k}^{(1, \beta)}, P_{k-1}^{(1,\beta)})_{\beta} &= 
 \frac{\lambda}{8} \int_{-1}^1 P_{k}^{(1, \beta)}(t)\, P_{k-1}^{(1,\beta)}(t) (1-t) \omega_{1,\beta}(t) dt \\
&= - \frac{\lambda}{8} \frac{k_{k-1}^{(1,\beta)}}{k_{k}^{(1,\beta)}} h_{k}^{(1,\beta)} = - \frac{\lambda}{8} \frac{ k (k+1) 2^{\beta+3}}{(2k+\beta)(2k+\beta+1)(2k+\beta+2)}.
\end{align*} 
On the other hand
\begin{align*}
(P_{k-1}^{(1, \beta)}, P_{k-1}^{(1,\beta)})_{\beta} &= 
 \frac{\lambda}{8} \int_{-1}^1 \left(P_{k-1}^{(1, \beta)}(t)\right)^2 (1-t) \omega_{1,\beta}(t) dt \\
&+ \int_{-1}^1 \left(\frac{d}{dt}\left((1-t)P_{k-1}^{(1, \beta)}(t)\right)\right)^2  \omega_{0,\beta+1}(t) dt.
\end{align*} 

For the first integral in this expression, using \eqref{RAF2} we get
\[
(1-t)P_{k-1}^{(1, \beta)}(t) = \frac{2k}{2k+\beta}\left(P_{k-1}^{(0, \beta)}(t)- P_{k}^{(0, \beta)}(t)\right)
\]
and therefore
\begin{align*}
\int_{-1}^1 \left(P_{k-1}^{(1, \beta)}(t)\right)^2& (1-t) \omega_{1,\beta}(t) dt = \frac{2k}{2k+\beta}\int_{-1}^1 P_{k-1}^{(1, \beta)}(t) \left(P_{k-1}^{(0, \beta)}(t)- P_{k}^{(0, \beta)}(t)\right) \omega_{1,\beta}(t) dt\\
&= \frac{2k}{2k+\beta}\left(\frac{k_{k-1}^{(0,\beta)}}{k_{k-1}^{(1,\beta)}} h_{k-1}^{(1,\beta)} +
\frac{k_{k-1}^{(1,\beta)}}{k_{k}^{(0,\beta)}} h_{k}^{(0,\beta)}\right) \\
& = \frac{2 k^2 2^{\beta+3} }{(2k+\beta-1)(2k+\beta)(2k+\beta+1)}
\end{align*}
For the second integral, again in view of \eqref{def-d}, we have
\begin{align*}
\int_{-1}^1 \left(\frac{d}{dt}\left((1-t)P_{k-1}^{(1, \beta)}(t)\right)\right)^2  \omega_{0,\beta+1}(t) dt &= k^2  
\int_{-1}^1 \left(P_{k-1}^{(0, \beta+1)}(t)\right)^2  \omega_{0,\beta+1}(t) dt \\
&= k^2 h_{k-1}^{(0, \beta+1)} = 
\frac{k^2 2^{\beta+3} }{2(2k+\beta)}
\end{align*}

Therefore, after some obvious cancelations \eqref{d-rec} can be written as
\begin{align*}
d_{k}^{\beta} &= \frac{\frac{ k+1 }{(2k+\beta+1)(2k+\beta+2)}}{-\left(\frac{ 2k }{(2k+\beta-1)(2k+\beta+1)} + \frac{4k}{\lambda}\right) + d_{k-1}^{\beta}\frac{ k-1 }{(2k+\beta-1)(2k+\beta-2)}}\\
&= \frac{\frac{ t_{k+1} }{(2k+\beta+1)}}{-\left(\frac{ 1 }{(2k+\beta-1)} + \frac{ 1 }{(2k+\beta+1)} + \frac{4(2k+\beta)}{\lambda}\right)t_k + d_{k-1}^{\beta}\frac{ t_{k-1} }{(2k+\beta-1)}}
\end{align*}
where \(t_k = \frac{k}{2k+\beta}\).

The recurrence relation \eqref{d-rec} shows that $d_k^{\beta}$ can be expressed as a continuous fraction. In this way, we can express $d_k^{\beta}$ in terms of a rational function of $\lambda$ whose numerator and denominator are, respectively, the $k$th and $k+1$th elements of a sequence of orthogonal polynomials. Consequently, we have
\begin{lem} 
For $k \in \mathbb{N}_0$, define the polynomials $r_k^{(\beta)}(s)$ by $r_{-1}^{(\beta)}(s) = 0, r_0^{(\beta)}(s) =1,$ and the recurrence relation
\begin{align} \label{3term-rj}
 \frac{t_{k+1}}{2k+\beta+1} r_{k+1}^{(\beta)}(s) &= -\left(\frac{ 1 }{(2k+\beta-1)} + \frac{ 1 }{(2k+\beta+1)} + (2k+\beta)\,s\right)t_k  r_{k}^{(\beta)}(s) \nonumber \\
 &+   \frac{ t_{k-1} }{(2k+\beta-1)}r_{k-1}^{(\beta)}(s). 
\end{align}
Then 
\[
d_{k}^{\beta} = \frac{r_{k}^{(\beta)}(\frac{4}{\lambda})}{r_{k+1}^{(\beta)}(\frac{4}{\lambda})}\]
\end{lem}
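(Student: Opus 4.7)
The plan is to prove the formula by induction on $k$. The key observation is that the nonlinear recurrence derived for $d_k^\beta$ just before the lemma statement is a linear fractional transformation in $d_{k-1}^\beta$, of the shape
$$d_k^\beta = \frac{N_k}{-M_k(s) + c_k\, d_{k-1}^\beta},$$
and such recurrences are classically linearized by introducing the numerators and denominators of the convergents of the associated continued fraction. These satisfy a linear three-term recurrence, and the content of the lemma is precisely that \eqref{3term-rj} is that recurrence.

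Setting $s = 4/\lambda$ throughout, the inductive step is a direct substitution. Assuming $d_{k-1}^\beta = r_{k-1}^{(\beta)}(s)/r_k^{(\beta)}(s)$, substitute into the nonlinear recurrence for $d_k^\beta$ and multiply the numerator and denominator of the right-hand side by $r_k^{(\beta)}(s)$, obtaining
$$d_k^\beta = \frac{\tfrac{t_{k+1}}{2k+\beta+1}\,r_k^{(\beta)}(s)}{-\left(\tfrac{1}{2k+\beta-1}+\tfrac{1}{2k+\beta+1}+(2k+\beta)s\right)t_k\,r_k^{(\beta)}(s) + \tfrac{t_{k-1}}{2k+\beta-1}\,r_{k-1}^{(\beta)}(s)}.$$
By \eqref{3term-rj}, the denominator on the right is exactly $\tfrac{t_{k+1}}{2k+\beta+1}\,r_{k+1}^{(\beta)}(s)$, so cancelling this common factor gives $d_k^\beta = r_k^{(\beta)}(s)/r_{k+1}^{(\beta)}(s)$, as required.

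For the base case I would verify the identity at $k=1$, combining the explicit formula \eqref{d-cero} for $d_1^\beta$ with the values of $r_1^{(\beta)}(s)$ and $r_2^{(\beta)}(s)$ produced from two applications of \eqref{3term-rj} starting from $r_{-1}^{(\beta)}(s)=0$, $r_0^{(\beta)}(s)=1$. The check reduces to matching the ratio $r_1^{(\beta)}(s)/r_2^{(\beta)}(s)$ against the simplified form of \eqref{def-d} at $k=1$.

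The main obstacle is not the inductive step, which is a routine algebraic manipulation once one recognizes the linear fractional structure. The real care is required in the base case, because the vanishing quantities $t_0 = 0$ and $r_{-1}^{(\beta)}=0$ make the first applications of \eqref{3term-rj} degenerate; one must simultaneously check that no $r_k^{(\beta)}(s)$ vanishes identically so that the ratio is well-defined as a rational function of $s$. Nonvanishing can be established by tracking the degree of $r_k^{(\beta)}(s)$ in $s$ through \eqref{3term-rj}: the leading contribution at each step picks up an extra factor of $(2k+\beta)s$, so $r_k^{(\beta)}(s)$ has degree exactly $k$ in $s$ and is not the zero polynomial.
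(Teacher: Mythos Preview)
Your approach—induction, exploiting that the recurrence for $d_k^\beta$ is a linear fractional map linearized by the continued-fraction convergents—is exactly the mechanism the paper invokes. The paper in fact gives no proof at all: it simply observes that \eqref{d-rec} exhibits $d_k^\beta$ as a continued fraction and states the lemma as an immediate consequence. Your inductive step is correct and makes that mechanism explicit.

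However, your handling of the base case and nonvanishing has a real problem, and it is one you half-detect but do not resolve. Applying \eqref{3term-rj} at $k=0$ gives
\[
\frac{t_1}{\beta+1}\,r_1^{(\beta)}(s) \;=\; -\Bigl(\tfrac{1}{\beta-1}+\tfrac{1}{\beta+1}+\beta s\Bigr)\,t_0\,r_0^{(\beta)}(s) \;+\; \frac{t_{-1}}{\beta-1}\,r_{-1}^{(\beta)}(s),
\]
and since $t_0=0$ and $r_{-1}^{(\beta)}=0$, both terms on the right vanish, forcing $r_1^{(\beta)}\equiv 0$. Your degree-tracking argument fails precisely here: the factor $(2k+\beta)s$ is multiplied by $t_k$, and $t_0=0$ kills the would-be leading term. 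Once $r_1^{(\beta)}\equiv 0$, the recurrence at $k=1$ forces $r_2^{(\beta)}\equiv 0$ as well (the surviving term carries $t_0=0$), and inductively all $r_k^{(\beta)}\equiv 0$ for $k\ge 1$. So neither the ratio $r_1^{(\beta)}/r_2^{(\beta)}$ nor any later one is defined, and your proposed base-case check cannot be carried out.

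This is not a flaw in your overall strategy but an imprecision in the lemma as stated: with the initial data $r_{-1}=0$, $r_0=1$ and the recurrence taken literally from $k=0$, the sequence collapses. The intended reading is that \eqref{3term-rj} governs $k\ge 1$, with $r_0$ and $r_1$ supplied as initial data matching $d_1^\beta$ through \eqref{d-cero} (equivalently \eqref{def-d} at $k=1$). Once $r_1^{(\beta)}$ is so specified as a nonzero degree-one polynomial, your degree argument is valid for all $k\ge 1$ (since $t_k\ne 0$ there), and the induction goes through exactly as you describe. In short: the inductive argument is sound, but you should recognize that \eqref{3term-rj} cannot be initialized at $k=0$ with the stated data, and adjust the base accordingly rather than rely on a degree count that breaks at the first step.
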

Therefore, the three-term relation in \eqref{3term-rj} offers an effective way to generate $r_{k}^{(\beta)}$. 

\section{The approximate solution of the BVP}
\setcounter{equation}{0}

Let us consider the Fourier expansion of the solution of the BVP \eqref{pde-kappa}-\eqref{boundary} in terms of the orthogonal system of polynomials $\{(1-\|x\|^2)  R_{j,\nu}^{n}(x)\}$, with \(R_{j,\nu}^{n}(x)\) as defined in Prop.~\ref{prop-3.1}
\begin{equation}\label{Fourier-coef}
u(x) = \sum_{n=0}^{\infty} \sum_{j=0}^{\lfloor \frac{n}{2} \rfloor} \sum_{\nu}
     \widehat{u}_{j,\nu}^{n} (1-\|x\|^2) R_{j,\nu}^{n}(x), 
\end{equation}
with
\[
     \widehat{u}_{j,\nu}^{n} = \frac{\langle u(x),(1-\|x\|^2) R_{j,\nu}^{n}(x)\rangle_{\nabla,\mathbb{B}^d}}{\|(1-\|x\|^2) R_{j,\nu}^n(x)\|_{\nabla,\mathbb{B}^d}^2}.
\]

In this way, the coefficients $\widehat{u}_{j,\nu}^{n}$ satisfy
\begin{align*}
\widehat{u}_{j,\nu}^{n}& \|(1-\|x\|^2) R_{j,\nu}^n(x)\|_{\nabla,\mathbb{B}^d}^2\\
& = \lambda\, \int_{\mathbb{B}^d} u(x) (1-\|x\|^2)^{\kappa+1} R_{j,\nu}^n(x)  dx +\int_{\mathbb{B}^d} \nabla u(x)\,\cdot \nabla \left[(1-\|x\|^2) R_{j,\nu}^n(x)\right] dx\\
& = \int_{\mathbb{B}^d}\left[-\Delta u(x) + \lambda (1-\|x\|^2)^{\kappa} u(x) \right] (1-\|x\|^2) R_{j,\nu}^n(x) dx\\
& = \int_{\mathbb{B}^d}f(x) (1-\|x\|^2) R_{j,\nu}^n(x) dx.
\end{align*}
This identity is where the \textbf{fully diagonalized spectral method for the BVP} cames from.

\bigskip

Using spherical polar coordinates, relation \eqref{connect_qj} provides a connection formula for the classical and Sobolev orthogonal polynomials on the unit ball. In fact, with \(\mu = 1, \beta = n-2k + \frac{d-2}{2}\) in \eqref{baseP}, we have
\begin{align}
P_{k,\nu}^{n,1}(x) & = P_{k}^{(1, \beta)}(2\,\|x\|^2 -1)\, Y_\nu^{n-2k}(x)\nonumber \\
 & = \left[q_k^{(\beta)}(t) + \sum_{j=k-(\kappa+1)}^{k-1}  a^{(\beta)}_{k,j} q_j^{(\beta)}(t)\right]Y_\nu^{n-2k}(x)\nonumber \\
& = R_{k,\nu}^{n}(x) + \sum_{j=k-(\kappa+1)}^{k-1} a^{(\beta)}_{k,j} R_{j,\nu}^{n-2(k-j)}(x)\label{connect_PR}
\end{align}
Let us denote
\[
\widetilde{f}_{j,\nu}^{n} := \widehat{u}_{j,\nu}^{n} \|(1-\|x\|^2) R_{j,\nu}^n(x)\|_{\nabla,\mathbb{B}^d}^2 = \int_{\mathbb{B}^d}f(x) (1-\|x\|^2) R_{j,\nu}^n(x) dx.
\]
If we denote by $\widehat{f}_{j,\nu}^{n,1}$ the  Fourier coefficient of $f$ in the basis of classical
ball polynomials, $P_{j,\nu}^{n,1}$, expression \eqref{connect_PR} can be written as follows
\begin{equation} \label{rec_Fourier}
\widehat{f}_{k,\nu}^{n,1} H_{k,n}^{1}  = \widetilde{f}_{k,\nu}^{n} + \sum_{j=k-(\kappa+1)}^{k-1} a^{(\beta)}_{k,j} \widetilde{f}_{j,\nu}^{n-2(k-j)}.
\end{equation}

Therefore, we obtain a recursive method to obtain the coefficients \(\widetilde{f}_{k,\nu}^{n}\).
The computation of the Fourier coefficients of \(f(x)\) in the \(L^2\) basis \(\widehat{f}_{j,\nu}^{n,1}\) could be accomplished with
fast transforms for orthogonal polynomials in the unit ball, as described in \cite{OX}. Observe that the algorithm devised in Section 4.1 to compute the coefficients \(a^{(\beta)}_{k,j}\) in \eqref{connect_PR} can be used simultaneously to implement \eqref{rec_Fourier}.

\section{A numerical experiment}
\setcounter{equation}{0}

In this section, we analyze a numerical experiment to explore the reliability and accuracy of the Sobolev spectral method for solving Dirichlet boundary problems on the unit ball $\mathbb{B}^d$.

We consider a second-order Dirichlet boundary value problem associated with the non-homogeneous Schr\"odinger equation with potential $V(x) = 1$ on the unit circle, i.e. the case \(\kappa = 0, \lambda = 8, d = 2\) of problem \eqref{pde-kappa}-\eqref{boundary}. 
\begin{align*}
 - &\Delta u(x_1,x_2) + \lambda u(x_1,x_2) = f(x_1,x_2), \quad (x_1,x_2) \in \mathbb{B}^2,\\
& u(x_1,x_2) = 0, \quad (x_1,x_2) \in \mathbb{S}^1.
\end{align*}

The right-hand side of the elliptic partial differential equation is given by
\[f(x_1,x_2)= e^{-x_1-x_2} \left(- 6 x_1^2 - 6 x_2^2 -4 x_1  - 4 x_2 + 10\right),\]
and therefore the solution of the boundary value problem is 
\[u(x_1,x_2) = e^{-x_1-x_2} (1-x_1^2-x_2^2).\]
In Figure~\ref{fig1}, we plot the solution $u(x_1,x_2)$ of the boundary value problem. 

\begin{figure}
\centerline{\includegraphics[scale=0.75]{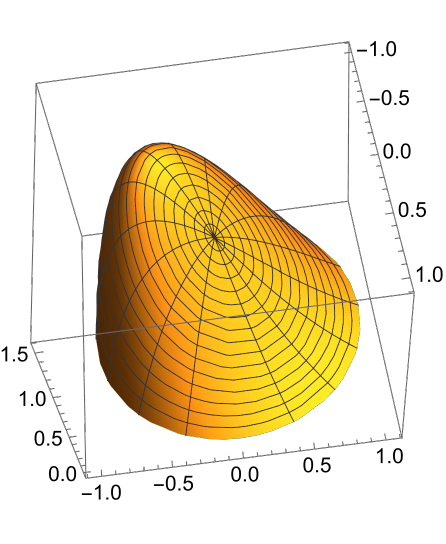}}
\caption{\label{fig1}The solution $u(x_1,x_2)$ of the BVP.}
\end{figure}

\bigskip

For $d =2$,  $\dim \mathcal{H}_n^2 = 2$ for $n \ge 1$. In polar coordinates, $x = (x_1, x_2) = (r \cos \theta, r \sin \theta)$, 
of $\mathbb{R}^2$, an orthogonal basis of $\mathcal{H}_n^2$ is given by
\[
Y^{(1)}_n (x_1,x_2) = r^n \cos n \theta, \quad Y^{(2)}_n (x_1,x_2) = r^n \sin n \theta.
\]

The family of polynomials $(1 - x_1^2 - x_2^2) R^n_{j,\nu}(x_1, x_2)$ orthogonal with respect to the Sobolev inner product
\begin{align*}
\langle f,g\rangle_{\nabla,\mathbb{B}^2} &:=  8\, \int_{\mathbb{B}^2} f(x_1,x_2) g(x_1,x_2)  dx_1 dx_2 +
    \int_{\mathbb{B}^2} \nabla f(x_1,x_2)\,\cdot \nabla g(x_1,x_2) dx_1 dx_2,
\end{align*}
can be easily obtained using polar coordinates. Table~\ref{tab1} contains the first Sobolev polynomials up to a multiplicative constant.

\begin{table}[ht]
\begin{center}
\bgroup
\def\arraystretch{1.5}
\begin{tabular}{|c|c|c|l|l|}
\hline 
\rule[-1ex]{0pt}{2.5ex} $n$ &$j$ & $\nu$ & \text{Polar} & \text{Cartesian} \\ 
\hline 
\rule[-1ex]{0pt}{2.5ex} 0 & 0 & 1 & $q_0^{(0)}(2\rho^2-1)$ & 1 \\ 
\hline 
\rule[-1ex]{0pt}{2.5ex} \multirow{2}{*}{1} & \multirow{2}{*}{0} & 1 & $q_0^{(1)}(2\rho^2-1)\rho \cos{\theta}$ & $x_1$ \\ 
\cline{3-5} 
\rule[-1ex]{0pt}{2.5ex}  &  & 2 & $q_0^{(1)}(2\rho^2-1)\rho \sin{\theta}$ & $x_2$ \\
\hline 
\rule[-1ex]{0pt}{2.5ex} \multirow{3}{*}{2} & \multirow{2}{*}{0} & 1 & $q_0^{(2)}(2\rho^2-1) \rho^2 \cos{2\theta}$ & $x_1^2-x_2^2$ \\ 
\cline{3-5} 
\rule[-1ex]{0pt}{2.5ex}  &  & 2 & $q_0^{(2)}(2\rho^2-1) \rho^2 \sin{2\theta}$ & $x_1 x_2$ \\ 
\cline{2-5} 
\rule[-1ex]{0pt}{2.5ex}  & 1 & 1  & $q_1^{(0)}(2\rho^2-1) $ & $-2 + 7(x_1^2+x_2^2)$ \\
\hline 
\rule[-1ex]{0pt}{2.5ex} \multirow{4}{*}{3} & \multirow{2}{*}{0} & 1 & $q_0^{(3)}(2\rho^2-1) \rho^3 \cos{3\theta}$ & $x_1^3-3 x_1 x_2^2$ \\ 
\cline{3-5} 
\rule[-1ex]{0pt}{2.5ex}  &  & 2 & $q_0^{(3)}(2\rho^2-1) \rho^3 \sin{3\theta}$ & $3 x_1^2 x_2 - x_2^3$ \\ 
\cline{2-5} 
\rule[-1ex]{0pt}{2.5ex}  & \multirow{2}{*}{1} & 1 & $q_1^{(1)}(2\rho^2-1) \rho \cos{\theta}$ & $(-7 + 15 (x_1^2 + x_2^2)) x_1$ \\ 
\cline{3-5} 
\rule[-1ex]{0pt}{2.5ex}  &  & 2 & $q_1^{(1)}(2\rho^2-1) \rho \sin{\theta}$ & $(- 7 + 15 (x_1^2 + x_2^2)) x_2$ \\ 
\hline 
\end{tabular}
\egroup 
\end{center}
\caption{Sobolev polynomials for \(n = 0, 1, 2, 3.\)}
\label{tab1}
\end{table}

Let us denote by
\begin{equation} \label{Fourier-Sob}
u(x_1,x_2) = \sum_{n=0}^{\infty} \sum_{j=0}^{\lfloor \frac{n}{2} \rfloor} \sum_{\nu}
     \widehat{u}_{j,\nu}^{n} (1 - x_1^2 - x_2^2) R_{j,\nu}^{n}(x_1,x_2)
\end{equation}
the Fourier--Sobolev expansion of $u(x_1,x_2)$. The coefficients $\widehat{u}_{j,\nu}^{n}$ satisfy
\begin{align*}
\widehat{u}_{j,\nu}^{n}& \|(1 - x_1^2 - x_2^2) R_{j,\nu}^n(x_1,x_2)\|_{\nabla,\mathbb{B}^2}^2\\
=& 8 \, \int_{\mathbb{B}^2} u(x_1,x_2) (1 - x_1^2 - x_2^2) R_{j,\nu}^n(x_1,x_2)  dx_1 dx_2 \\
& +\int_{\mathbb{B}^2} \nabla u(x_1,x_2)\,\cdot \nabla \left[(1 - x_1^2 - x_2^2) R_{j,\nu}^n(x_1,x_2)\right] dx_1 dx_2\\
=& \int_{\mathbb{B}^2}\left[-\Delta u(x_1,x_2) + 8 u(x_1,x_2) \right] (1 - x_1^2 - x_2^2) R_{j,\nu}^n(x_1,x_2) dx_1 dx_2\\
=& \int_{\mathbb{B}^2}f(x) (1 - x_1^2 - x_2^2) R_{j,\nu}^n(x_1,x_2) dx_1 dx_2,
\end{align*}
and therefore can be easily computed from the function $f(x_1,x_2)$. The partial sums of the series \eqref{Fourier-Sob} provide the approximants of the solution $u(x_1,x_2)$ of the BVP. For instance, the approximant 
\[
u_3(x_1,x_2) = \sum_{n=0}^{3} \sum_{j=0}^{\lfloor \frac{n}{2} \rfloor} \sum_{\nu}
     \widehat{u}_{j,\nu}^{n} (1-x_1^2-x_2^2) R_{j,\nu}^{n}(x_1,x_2)
\]
is given by
\begin{align*}
u_3(x_1,x_2) &= (1-x_1^2-x_2^2) \left(0.9938 - 0.9958 x_1 - 0.9958 x_2 + 0.5505 x_1^2 + 1.1005 x_1 x_2 \right. \\
& \left.+ 0.5505 x_2^2 - 0.1808 x_1^3 - 0.5423 x_1^2 x_2 - 0.5423 x_1 x_2^2 - 0.1808 x_2^3\right).
\end{align*}
This gives a good approximation to the solution $u(x_1,x_2)$ as we can check in Figure~\ref{fig3} which shows a contour plot for the difference $u(x_1,x_2)-u_3(x_1,x_2)$.

\begin{figure}
\centerline{\includegraphics[scale=1]{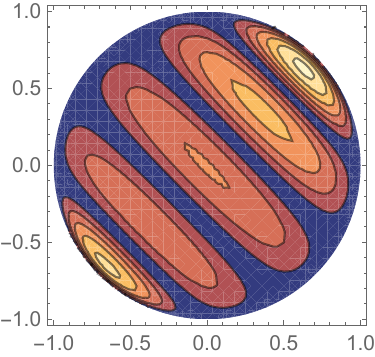}
\includegraphics[scale=0.75]{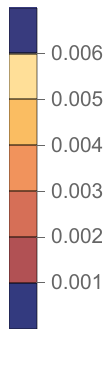}}
\caption{\label{fig3} A contour plot for the difference $u(x_1,x_2)-u_N(x_1,x_2)$ for \(N=3\).}
\end{figure}

To illustrate the convergence of the Sobolev spectral method we have computed the Fourier-Sobolev partial sums
\[
u_N(x_1,x_2) = \sum_{n=0}^{N} \sum_{j=0}^{\lfloor \frac{n}{2} \rfloor} \sum_{\nu}
     \widehat{u}_{j,\nu}^{n} (1-x_1^2-x_2^2) R_{j,\nu}^{n}(x_1,x_2)
\]
for $N = 0, 1, \ldots, 7$. Observe that the computation of $u_N(x_1,x_2)$ implies $(N+1)(N+2)/2$ orthogonal polynomials. Whereas the Sobolev orthogonal polynomials were computed symbolically, the Fourier coefficients were computed numerically using an adaptative cubature procedure on the unit disk.

In Figure~\ref{fig2}, we show a logarithmic plot of the square of the errors in the Sobolev norm
\[ \epsilon_N = \|u(x,y)-u_N(x,y)\|_{\nabla,\mathbb{B}^2}^2,\]
for $N = 0, 1, \ldots, 7$. Clearly, the near straight aligned points indicate an exponential convergence rate. In fact, since the solution is an entire function, the expansion coefficients and error decay super-exponentially.  

\begin{figure}
\centerline{\includegraphics[scale=0.75]{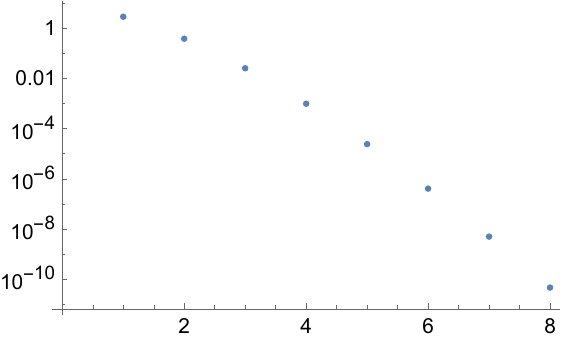}}
\caption{\label{fig2}A logarithmic plot of the errors for $N = 0, 1, \ldots, 7$.}
\end{figure}

\section*{Acknowledgments} 
We thank the anonymous reviewers for their insightful suggestions that improved the clarity and quality of our paper,

\end{document}